\newtheorem{theorem}{Theorem}[section]
\newtheorem{lemma}[theorem]{Lemma}
\newtheorem{corollary}[theorem]{Corollary}
\newtheorem{proposition}[theorem]{Proposition}
\theoremstyle{definition}
\newtheorem{definition}[theorem]{Definition}
\let\frak=\mathfrak
\let\Bbb=\mathbb
\let\phi=\varphi
\def\Hom{\operatorname{Hom}}
\def\Ext{\operatorname{Ext}}
\def\Ass{\operatorname{Ass}}
\def\Spec{\operatorname{Spec}}
\def\depth{\operatorname{depth}}
\def\Ker{\operatorname{Ker}}
\def\Im{\operatorname{Im}}
\let\oldbigwedge\bigwedge
\def\BIGwedge{{\textstyle\oldbigwedge}}
\def\medwedge{{\scriptstyle\oldbigwedge}}
\def\bigwedge{\mathchoice{\BIGwedge}{\BIGwedge}{\medwedge}{}}
\DeclareMathOperator{\Max}{Max} \DeclareMathOperator{\Supp}{Supp}
\let\epsilon=\varepsilon
\let\tilde=\widetilde
\begin{document}

\title[Regular sequences and  local cohomology modules...]{Regular sequences and  local cohomology modules with
respect to a pair of ideals}
\author{  Sh. Payrovi }
\author{ M.  Lotfi Parsa }
\address{I. K. International University,
 Postal Code: 34149-1-6818
Qazvin - IRAN } \email{shpayrovi@ikiu.ac.ir} \email{
lotfi.parsa@yahoo.com}

\begin{abstract}
Let $R$ be a  Noetherian ring, $I$ and $J$  two ideals of $R$ and  $t$  an integer.
Let $S$ be the class of  Artinian $R$-modules, or  the class of all $R$-modules $N$ with $\dim_RN\leq k$,
where $k$ is an integer.
 It is proved that
$\inf\{i: H^{i}_{I,J}(M)\notin S\}=\inf\{S-\depth_\frak{a}(M):
\frak{a}\in \tilde{\rm W}(I,J)\}$,
where $M$  is a finitely generated $R$-module, or is a  $ZD$-module
 such that $M/\frak{a}M\notin S$
for all $\frak{a}\in \tilde{\rm W}(I,J)$.
Let $\Supp_R H^{i}_{I,J}(M)$ be a finite subset of $\Max(R)$ for
all $i<t$. It is  shown that there are maximal ideals $\frak m_1, \frak
m_2,\ldots,\frak m_k$ of $R$ such that
$H^{i}_{I,J}(M)\cong H^{i}_{\frak m_1}(M)\oplus H^{i}_{\frak
m_2}(M)\oplus\cdots\oplus H^{i}_{\frak m_k}(M)$ for all $i<t$.
\end{abstract}

{\footnote{$\hspace*{-5mm}$ 2010 Mathematics Subject
Classification. 13C15, 13D07, 13D45. \\Key words and Phrases.
Local cohomology, Regular sequence, Serre subcategory, $ZD$-module.} \vspace{-0.5cm}
\maketitle
\section{Introduction }

Throughout this paper, $R$ is a commutative Noetherian ring with
non-zero identity, $I$ and $J$ are two ideals of $R$, $M$ is an
$R$-module and  $s$ and $t$ are two   integers. For notations and
terminologies not given in this paper, the reader is referred to
\cite{bs}, \cite{bh} and \cite{ty} if necessary.

The theory of local cohomology, which was introduced by Grothendieck \cite{g}, is a useful
tool for attacking problems in commutative algebra and algebraic geometry.
Bijan-Zadeh  \cite{b} introduced  the local cohomology
modules with respect to a system of ideals, which is a generalization of ordinary local cohomology modules.
As a special case of these extend modules,
Takahashi, Yoshino and Yoshizawa \cite{ty} defined the local cohomology
modules with respect to a pair of ideals. To be
more precise, let ${\rm W}(I, J)=\{\frak p\in \Spec(R):
I^t\subseteq J+\frak p ~~~~\text{for some positive
integer}~~~~t\}$. The set of elements $x$ of $M$ such that
$\Supp_RRx\subseteq {\rm W}(I, J)$, is said to be $(I, J)$-torsion
submodule of $M$ and is denoted by $\Gamma_{I,J}(M)$.
 $\Gamma_{I,J}(-)$ is a covariant,
$R$-linear functor from the category of $R$-modules to itself. For
an integer $i$, the local cohomology functor $H^{i}_{I,J}(-)$ with
respect to $(I, J)$, is defined to be the $i$-th right derived
functor of $\Gamma_{I,J}(-)$. Also $H^{i}_{I,J}(M)$ is called the
$i$-th local cohomology module of $M$ with respect to $(I, J)$. If
$J=0$, then $H^{i}_{I,J}(-)$ coincides with the ordinary local
cohomology functor $H^{i}_{I}(-)$.  Let
$\tilde{{\rm W}}(I, J)=\{\frak a\unlhd R: I^t\subseteq J+\frak
a~~~~\text{for some positive integer} ~~~~t\}$.
 It is easy to see that
$$\Gamma_{I,J}(M)=\{x\in M: \exists\ \frak a\in {\rm \tilde W}(I,J),\ \frak ax=0\}
=\bigcup_{\frak a\in {\rm \tilde W}(I,J)}(0:_{M}\frak a).$$

In section 2, we study extension functors of local cohomology modules with respect to a pair
of ideals. Let $S$ be a Melkersson subcategory with respect to  $I$,
and $M$  a finitely generated $R$-module. The current authors, in
\cite[Theorem 2.11]{pl2}, showed that if $H^{i}_{I,J}(M)\in S$ for
all $i < t$, then $H^{i}_{I}(M)\in S$ for all $i < t$. In
\ref{16}, we improve this result and we show that if
$\Ext_R^{j}(N, H^{i}_{I,J}(M))\in S$ for all $i<t$ and all $j<
t-i$, then $H^{i}_{\frak{a}}(M)\in S$ for all $i<t$, where $M$ is
an arbitrary $R$-module and $N$ is  a finitely generated
$R$-module with $\Supp_R N={\rm V}(\frak{a})$ for some
$\frak{a}\in \tilde{{\rm W}}(I,J)$.

Let $S$ be a Serre subcategory of the category of $R$-modules.
Aghapournahr and Melkersson \cite{am}
introduced the notion of $S$-sequences on $M$ as a generalization of regular sequences. Suppose that $S$ is a Melkersson subcategory with respect to
 $I$, $M$ is a $ZD$-module and $M/IM\notin S$.
In \cite[Theorem 2.9]{pl} it is proved
 that  all maximal $S$-sequences on $M$ in $I$, have the same length.
  If this common length is denoted by $S-\depth_I(M)$, then
$S-\depth_I(M)=\inf\{i: H^i_I(M)\notin S\}$; see \cite[Corollary 2.12]{pl}.
In \ref{23}, we generalize this result as follows.
Let $S$ be the class of  Artinian $R$-modules, or  the class of all $R$-modules $N$ with $\dim_RN\leq k$,
where $k$ is an integer.
 Then
$\inf\{S-\depth_\frak{a}(M):
\frak{a}\in \tilde{\rm W}(I,J)\}=\inf\{i: H^{i}_{I,J}(M)\notin S\}$,
where  $M$ is a finitely generated $R$-module, or is a  $ZD$-module
 such that $M/\frak{a}M\notin S$
for all $\frak{a}\in \tilde{\rm W}(I,J)$.

In section 3, we get some identities between local cohomology modules.
Let $\Supp_R H^{i}_{I,J}(M)$ be a finite subset of $\Max(R)$ for
all $i<t$. Then there are maximal ideals $\frak m_1, \frak
m_2,\ldots,\frak m_k$ of $R$ such that
$H^{i}_{I,J}(M)\cong H^{i}_{\frak m_1}(M)\oplus H^{i}_{\frak
m_2}(M)\oplus\cdots\oplus H^{i}_{\frak m_k}(M)$ for all $i<t$; see
\ref{40}. As a consequence  we conclude  that,
if $(R,\frak m)$ is a local ring, then
$\inf\{i: H^{i}_{I,J}(M)\text{is not Artinian}\}
=\inf\{i: H^{i}_{I,J}(M)\not\cong H^{i}_{\frak m }(M)\}$; see \ref{38}.

\section{Extension functors of local cohomology modules}

Recall that $R$ is a Noetherian ring, $I$ and $J$ are ideals of
$R$ and $M$ is an $R$-module.

\begin{definition}
A full subcategory of the category of $R$-modules is said to be
Serre subcategory, if it is closed under taking submodules,
quotients and extensions. A Serre subcategory $S$ is said to be
a Melkersson subcategory with respect to $I$, if for any $I$-torsion $R$-module $M$,
 $0:_MI\in S$ implies that $M\in S$.
A Serre subcategory is called Melkersson subcategory when it is a Melkersson
subcategory with respect to all ideals of $R$.
\end{definition}

The class of finitely generated modules and the class of
 weakly Laskerian modules are Serre subcategories. Aghapournahr and Melkersson \cite[Lemma 2.2]{am}
  proved  that if a Serre subcategory is
closed under taking injective hulls, then it is a Melkersson
subcategory.
The class of zero modules, Artinian $R$-modules,  modules with finite support and the class of $R$-modules
$N$ with $\dim_R N\leq k$, where $k$ is a non-negative integer, are Serre subcategories closed
under taking injective hulls, and hence are Melkersson subcategories; see \cite[Example 2.4]{am}.
The class of $I$-cofinite Artinian
modules  is a Melkersson subcategory with respect to $I$,
but is not closed under taking injective hulls; see \cite[Example 2.5]{am}.

 The following result is a generalization of \cite[Theorem 2.1]{atv}.

\begin{theorem}\label{11}
Let $N$ be an  $(I,J)$-torsion $R$-module. If $\Ext_R^{t-i}(N,
H^{i}_{I,J}(M))\in S$ for all $i\leq t$, then $\Ext_R^t(N, M)\in
S$.
\end{theorem}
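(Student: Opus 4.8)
The plan is to imitate the standard spectral sequence argument that proves the corresponding fact for ordinary local cohomology, adapting it to the pair-of-ideals setting. Recall that for any $R$-module $M$ there is the Grothendieck-type spectral sequence
\[
E_2^{p,q}=\Ext_R^p\bigl(N,H^q_{I,J}(M)\bigr)\Longrightarrow \Ext_R^{p+q}(N,M),
\]
which is available here because $N$ is $(I,J)$-torsion, so that $\Hom_R(N,-)$ factors through $\Gamma_{I,J}(-)$; concretely, for an injective $R$-module $E$ one has $\Hom_R(N,E)\iso\Hom_R(N,\Gamma_{I,J}(E))$, and composing the functors $\Gamma_{I,J}(-)$ and $\Hom_R(N,-)$ gives $\Hom_R(N,-)$ on the category of $R$-modules (using that $N$ being $(I,J)$-torsion means every element of $N$ is killed by some $\frak a\in\tilde{\rm W}(I,J)$). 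The two hypotheses of the theorem — $\Ext_R^{t-i}(N,H^i_{I,J}(M))\in S$ for all $i\le t$ — are exactly the statement that the $E_2$-terms $E_2^{t-i,i}$ lying on the total-degree-$t$ antidiagonal belong to $S$.

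The key steps, in order, would be: first, set up the spectral sequence and record that $E_2^{p,q}\in S$ whenever $p+q=t$. Second, observe that the subquotients $E_r^{p,q}$ for $r\ge 2$ are obtained from $E_2^{p,q}$ by repeatedly passing to kernels of outgoing differentials and then to quotients by images of incoming differentials; since $S$ is a Serre subcategory it is closed under submodules and quotients, so $E_r^{p,q}\in S$ for all $r\ge2$ whenever $E_2^{p,q}\in S$. In particular every $E_\infty^{t-i,i}$ with $0\le i\le t$ lies in $S$. Third, the abutment $\Ext_R^t(N,M)$ carries a finite filtration whose successive quotients are precisely these $E_\infty^{p,q}$ with $p+q=t$; there are only finitely many nonzero ones (indeed $p=t-i$ ranges over $0,\dots,t$, and $q<0$ terms vanish). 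Using that $S$ is closed under extensions, an induction up the finite filtration gives $\Ext_R^t(N,M)\in S$.

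The one genuine point that needs care — and the place I expect the proof to actually do some work rather than cite generalities — is the construction and convergence of the spectral sequence in this relative setting: one must check that $\Gamma_{I,J}(-)$ sends injectives to $\Hom_R(N,-)$-acyclic objects (equivalently, that injective $R$-modules remain acceptable after applying $\Gamma_{I,J}$, or at least have vanishing higher $\Ext$ against $N$), so that the Grothendieck composite-functor spectral sequence for $\Hom_R(N,-)\circ\Gamma_{I,J}(-)=\Hom_R(N,-)$ is legitimate. This is where the hypothesis that $N$ is $(I,J)$-torsion is essential: it forces $\Hom_R(N,E)=\Hom_R(N,\Gamma_{I,J}(E))$ for injective $E$, and since $\Gamma_{I,J}(E)$ is again injective (a standard fact for the pair-of-ideals torsion functor, proved in \cite{ty}), the higher derived functors $\Ext^j_R(N,\Gamma_{I,J}(E))$ vanish for $j>0$, giving the acyclicity needed. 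Once that input is in place, the filtration/Serre-closure bookkeeping in the previous paragraph is routine, and the special case $N=R/I$ (or the case recovering \cite[Theorem 2.1]{atv}) drops out immediately.
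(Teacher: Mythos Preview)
Your proposal is correct and follows essentially the same route as the paper: both set up the Grothendieck spectral sequence $E_2^{p,q}=\Ext_R^p(N,H^q_{I,J}(M))\Rightarrow\Ext_R^{p+q}(N,M)$ arising from the composite $\Hom_R(N,-)\circ\Gamma_{I,J}(-)=\Hom_R(N,-)$, observe that the hypothesis places the $E_2$-terms on the degree-$t$ antidiagonal in $S$, and then use the Serre property to push this through to the abutment via the standard filtration. If anything, your write-up is more explicit than the paper's about why the spectral sequence exists (the paper simply cites \cite[Theorem 11.38]{r} for the composite-functor spectral sequence, whereas you spell out the acyclicity check using the injectivity of $\Gamma_{I,J}(E)$ from \cite{ty}).
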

\begin{proof}
Let
$F(-)=\Hom_R(N,-)$ and $G(-)=\Gamma_{I,J}(-)$. Then we have
 $FG(M)=
\Hom_R(N, M)$. By \cite[Theorem 11.38]{r}, there is the
Grothendieck  spectral sequence
$$E_2^{p,q}:=\Ext_R^{p}(N, H^{q}_{I,J}(M))\Rightarrow \Ext_R^{p+q}(N,
M).$$  There is a finite filtration
$$0=\phi^{t+1}H^{t}\subseteq\phi^{t}H^{t}\subseteq\cdots
\subseteq\phi^{1}H^{t}\subseteq\phi^{0}H^{t}=\Ext_R^{t}(N,M)$$
such that $E_\infty^{t-i,i}\cong
\phi^{t-i}H^{t}/\phi^{t+1-i}H^{t}$ for all $i\leq t$.
It is enough to show that $\phi^{0}H^t\in S$. By hypothesis,
$E_j^{t-i,i}\in S$ for all $j\geq 2$ and $i\leq t$, and
so $E_{\infty}^{t-i,i}\in S$ for all $i \leq t$. The
 sequence
$$0\longrightarrow\phi^{t+1-i}H^t\longrightarrow\phi^{t-i}H^t\longrightarrow
E_\infty^{t-i,i}\longrightarrow 0$$
is exact for all $i\leq t$.
Therefore it follows that $\phi^{0}H^t\in S$.
 \end{proof}

\begin{corollary}\label{12}
Let $N$ be an $(I,J)$-torsion $R$-module. If $\Ext_R^{j}(N,
H^{i}_{I,J}(M))\in S$ for all $i<t$ and all $j< t-i$, then
$\Ext_R^{i}(N,M)\in S$ for all $i<t$.
\end{corollary}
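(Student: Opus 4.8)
The plan is to deduce this directly from Theorem \ref{11}, applied once for each cohomological degree below $t$; no further spectral sequence argument is needed beyond what \ref{11} already packages. Fix an integer $s$ with $s<t$; the goal is to show $\Ext_R^{s}(N,M)\in S$, and since $s$ is arbitrary this gives the conclusion.

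First I would identify which instances of the hypothesis are relevant. For every $i$ with $i\leq s$ we have $i<t$, and also $s-i<t-i$ because $s<t$. Hence the assumption that $\Ext_R^{j}(N,H^{i}_{I,J}(M))\in S$ for all $i<t$ and all $j<t-i$ applies with the choice $j=s-i$, yielding $\Ext_R^{s-i}(N,H^{i}_{I,J}(M))\in S$ for all $i\leq s$.

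Next I would invoke Theorem \ref{11} with the integer $t$ there replaced by $s$: since $N$ is an $(I,J)$-torsion $R$-module and $\Ext_R^{s-i}(N,H^{i}_{I,J}(M))\in S$ for all $i\leq s$, that theorem gives $\Ext_R^{s}(N,M)\in S$. As $s<t$ was arbitrary, this proves $\Ext_R^{i}(N,M)\in S$ for all $i<t$, as desired.

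There is essentially no serious obstacle here; the only point requiring care is the bookkeeping of index ranges, namely that the strict inequality $s<t$ is precisely what turns the non-strict range $i\leq s$ occurring in Theorem \ref{11} into the strict conditions $i<t$ and $j<t-i$ figuring in the corollary. One could alternatively organize the argument as an induction on $t$, peeling off the top degree $\Ext_R^{t-1}(N,M)$ via \ref{11} and disposing of the lower degrees by the inductive hypothesis, but the single uniform application above is cleaner and makes the degree bookkeeping transparent.
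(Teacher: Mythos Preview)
Your proposal is correct and is exactly the argument the paper intends: the corollary is stated without proof immediately after Theorem~\ref{11}, and your application of that theorem with $t$ replaced by each $s<t$, together with the index check $i\leq s<t$ and $s-i<t-i$, is precisely the routine verification being left to the reader.
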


\begin{corollary}\label{15}
Suppose that $S$ is a Melkersson subcategory with respect to  $I$,
and $N$ is a finitely generated $R$-module with $\Supp_R N=V(\frak{a})$ for some
$\frak{a}\in \tilde{{\rm W}}(I,J)$. If $\Ext_R^{j}(N, H^{i}_{I,J}(M))\in
S$ for all $i<t$ and all $j< t-i$, then $H^{i}_{\frak{a}}(L,M)\in
S$ for all $i<t$ and all finitely generated $R$-modules $L$.
\end{corollary}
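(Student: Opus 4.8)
The plan is to deduce Corollary \ref{15} from Corollary \ref{12} by a two-step reduction: first pass from the ordinary local cohomology $H^i_\frak{a}(L,M)$ (with respect to the ideal $\frak{a}$) to the Ext modules $\Ext_R^i(L,M)$, and then pass from these to the Ext modules $\Ext_R^i(N,M)$ which Corollary \ref{12} controls once we know that $N$ is $(I,J)$-torsion. The first step is the standard fact that, since $S$ is a Serre subcategory that is Melkersson with respect to $I$ (hence, via the inclusion $\frak{a}\in\tilde{\rm W}(I,J)$, one should check it behaves well with respect to $\frak{a}$), the membership $H^i_\frak{a}(L,M)\in S$ for all $i<t$ is equivalent to $\Ext_R^i(R/\frak{a},M)\in S$ for all $i<t$, and more generally to $\Ext_R^i(L,M)\in S$ for all $i<t$ and all finitely generated $L$ with $\Supp_R L\subseteq V(\frak{a})$; this is the content of \cite[Theorem 2.1]{atv} or its analogues and is exactly the kind of statement Theorem \ref{11} and Corollary \ref{12} are modeled on.

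Concretely, here is the order I would carry out the argument. First, observe that $N$ is $(I,J)$-torsion: since $\Supp_R N=V(\frak{a})$ and $\frak{a}\in\tilde{\rm W}(I,J)$ means $I^n\subseteq J+\frak{a}$ for some $n$, every prime in $V(\frak{a})$ lies in ${\rm W}(I,J)$, so $\Supp_R N\subseteq {\rm W}(I,J)$ and hence $N=\Gamma_{I,J}(N)$. Second, apply Corollary \ref{12} with this $N$: the hypothesis $\Ext_R^j(N,H^i_{I,J}(M))\in S$ for all $i<t$, $j<t-i$ yields $\Ext_R^i(N,M)\in S$ for all $i<t$. Third, since $\Supp_R N=V(\frak{a})$ and $S$ is a Serre subcategory, a Gruson-type / filtration argument shows that for \emph{every} finitely generated $R$-module $L$ with $\Supp_R L\subseteq V(\frak{a})$ we get $\Ext_R^i(L,M)\in S$ for all $i<t$ (filter $L$ by submodules with quotients $R/\pp$, $\pp\in\Supp_R L\subseteq V(\frak{a})=\Supp_R N$, and use that each such $R/\pp$ is a subquotient of a direct sum of copies of $N$ up to the relevant support; then use the long exact sequences of Ext and the fact that $S$ is closed under sub, quotient, extension, and finite direct sums). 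Fourth, convert back to local cohomology: for $L$ finitely generated with $\Supp_R L\subseteq V(\frak{a})$ one has the spectral sequence or direct dimension-shifting argument relating $H^i_\frak{a}(L,M)$ to the modules $\Ext_R^j(L/\frak{a}^kL,M)$ (equivalently $\Ext_R^j(R/\frak{a}^k,L\otimes\cdots)$-type terms), and the Melkersson hypothesis on $S$ with respect to $\frak{a}$ (which follows from the Melkersson property with respect to $I$ together with $\frak{a}\in\tilde{\rm W}(I,J)$, since $I$-torsion and $\frak{a}$-torsion notions interact through ${\rm W}(I,J)$) lets us conclude $H^i_\frak{a}(L,M)\in S$ for all $i<t$.

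The main obstacle I anticipate is the bookkeeping in the third and fourth steps, namely upgrading ``$\Ext_R^i(N,M)\in S$ for the single module $N$ with $\Supp_R N=V(\frak{a})$'' to ``$H^i_\frak{a}(L,M)\in S$ for all finitely generated $L$ supported in $V(\frak{a})$.'' This requires (a) a clean statement that for finitely generated modules with the same support $V(\frak{a})$, membership of all low Ext-into-$M$ modules in a Serre subcategory is independent of which such module one picks — this is where one invokes a prime filtration and the stability of $S$ under subquotients and finite direct sums — and (b) the translation between $\Ext$ and $H_\frak{a}$, which is where the Melkersson-with-respect-to-$I$ hypothesis on $S$ is genuinely used (the ordinary-local-cohomology version of Theorem \ref{11}, essentially \cite[Theorem 2.1]{atv}). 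Both (a) and (b) are standard in this circle of ideas, so I expect the proof to be short once these two lemmas are cited or quickly reproved; the novelty is entirely in having routed through the pair-of-ideals Corollary \ref{12}.
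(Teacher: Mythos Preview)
Your approach is essentially the paper's: observe that $N$ is $(I,J)$-torsion (since $\Supp_R N=V(\frak a)\subseteq {\rm W}(I,J)$), apply Corollary~\ref{12} to obtain $\Ext_R^i(N,M)\in S$ for all $i<t$, and then upgrade this to $H^i_{\frak a}(L,M)\in S$ for all finitely generated $L$. The paper carries out this last passage in a single citation of \cite[Theorem~2.9]{am}, which is precisely the package consisting of your steps (3) and (4): the Gruson-type prime-filtration argument showing that the condition ``$\Ext_R^i(-,M)\in S$ for $i<t$'' depends only on $\Supp_R N=V(\frak a)$, together with the translation from $\Ext$ to generalized local cohomology via the Melkersson condition. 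So you have not taken a different route; you have expanded the content of the cited theorem.

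One caution on your step (4): the parenthetical assertion that ``Melkersson with respect to $I$'' together with $\frak a\in\tilde{\rm W}(I,J)$ forces ``Melkersson with respect to $\frak a$'' is not justified by what you wrote --- an $\frak a$-torsion module need not be $I$-torsion, since $\frak a\in\tilde{\rm W}(I,J)$ only gives $I^n\subseteq \frak a+J$, not $I^n\subseteq\frak a$. The paper does not attempt this reduction either; it simply invokes \cite[Theorem~2.9]{am} under the stated hypothesis. You should do the same rather than improvise an argument here.
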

\begin{proof}
The result follows by \ref{12} and \cite[Theorem 2.9]{am}.
\end{proof}

The following result improves \cite[Theorem 2.11]{pl2}.

\begin{corollary}\label{16}
Suppose that $S$ is a Melkersson subcategory with respect to $I$, and $N$ is a finitely
generated $R$-module with $\Supp_R N={\rm V}(\frak{a})$ for some
$\frak{a}\in \tilde{{\rm W}}(I,J)$. If $\Ext_R^{j}(N,
H^{i}_{I,J}(M))\in S$ for all $i<t$ and all $j< t-i$, then
$H^{i}_{\frak{a}}(M)\in S$ for all $i<t$.
\end{corollary}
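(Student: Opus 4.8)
The plan is to derive Corollary \ref{16} as an immediate specialization of Corollary \ref{15}. Indeed, Corollary \ref{15} produces vanishing (in $S$) of the generalized local cohomology modules $H^i_{\frak a}(L,M)$ for \emph{all} finitely generated $R$-modules $L$, so the natural move is to instantiate $L = R$. The observation making this work is the standard identification $H^i_{\frak a}(R,M) \cong H^i_{\frak a}(M)$: since $\Hom_R(R,-)$ is the identity functor, the generalized local cohomology functor $H^i_{\frak a}(R,-)$, being the $i$-th right derived functor of $\Gamma_{\frak a}(R,-) = \Gamma_{\frak a}(-)$, coincides with the ordinary local cohomology functor $H^i_{\frak a}(-)$. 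Thus the hypotheses of Corollary \ref{16} are exactly the hypotheses of Corollary \ref{15}, and the conclusion $H^i_{\frak a}(M) \in S$ for all $i < t$ is precisely what Corollary \ref{15} gives when $L = R$.

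Concretely, the steps I would carry out are: first, invoke Corollary \ref{15} with the given data $S$, $N$, $\frak a \in \tilde{\mathrm W}(I,J)$, and the hypothesis $\Ext_R^j(N, H^i_{I,J}(M)) \in S$ for all $i<t$, $j<t-i$; second, choose the auxiliary finitely generated module $L$ to be $R$ itself; third, record the canonical isomorphism $H^i_{\frak a}(R,M) \cong H^i_{\frak a}(M)$ for all $i$; and conclude $H^i_{\frak a}(M) \in S$ for all $i<t$. No spectral-sequence bookkeeping or Melkersson-subcategory argument needs to be redone here, since all of that has been absorbed into Corollaries \ref{12} and \ref{15} and \cite[Theorem 2.9]{am}.

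I do not anticipate a genuine obstacle: the only point requiring a word of justification is the identification of $H^i_{\frak a}(R,M)$ with $H^i_{\frak a}(M)$, and even that is routine once one notes $\Gamma_{\frak a}(R,-) = \Gamma_{\frak a}(-)$ and invokes uniqueness of derived functors. If one wanted to be maximally self-contained one could instead avoid generalized local cohomology entirely and re-run the argument of Corollary \ref{15} directly with $H^i_{\frak a}(M)$ in place of $H^i_{\frak a}(L,M)$, but citing \ref{15} with $L=R$ is the cleaner route and is the one I would write up.

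\begin{proof}
Apply \ref{15} with $L=R$. Since $\Hom_R(R,-)$ is the identity functor, $\Gamma_{\frak a}(R,-)=\Gamma_{\frak a}(-)$, and hence $H^{i}_{\frak a}(R,M)\cong H^{i}_{\frak a}(M)$ for all $i$. Thus $H^{i}_{\frak a}(M)\in S$ for all $i<t$.
\end{proof}
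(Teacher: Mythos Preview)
Your proposal is correct and follows exactly the paper's own proof, which consists of the single line ``In \ref{15}, put $L=R$.'' Your added remark justifying $H^{i}_{\frak a}(R,M)\cong H^{i}_{\frak a}(M)$ is a harmless elaboration of what the paper leaves implicit.
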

\begin{proof}
In \ref{15}, put $L=R$.
\end{proof}

\begin{corollary}\label{17}
If $S$ is a Melkersson subcategory with respect to $I$, then
$$\inf\{i: H^{i}_{I,J}(M)\notin S\}
\leq\inf\{\inf\{i: H^{i}_{\frak{a}}(M)\notin S\}: \frak{a}\in
\tilde{{\rm W}}(I,J)\}.$$
\end{corollary}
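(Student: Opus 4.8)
The plan is to deduce this directly from \ref{16}. It suffices to fix a single ideal $\frak{a}\in\tilde{{\rm W}}(I,J)$ and show
$$\inf\{i: H^{i}_{I,J}(M)\notin S\}\le\inf\{i: H^{i}_{\frak{a}}(M)\notin S\};$$
taking the infimum of the right-hand side over all such $\frak{a}$ then gives the asserted inequality. If $\inf\{i: H^{i}_{\frak{a}}(M)\notin S\}=\infty$ there is nothing to prove, so I may assume it is an integer, say $n$, in which case $H^{n}_{\frak{a}}(M)\notin S$ while $H^{i}_{\frak{a}}(M)\in S$ for $i<n$.

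Next I would argue by contradiction, assuming $H^{i}_{I,J}(M)\in S$ for all $i\le n$. Put $N=R/\frak{a}$, a finitely generated $R$-module with $\Supp_{R}N={\rm V}(\frak{a})$. Here I use the routine fact that a Serre subcategory is closed under $\Ext_{R}^{j}(N,-)$ for finitely generated $N$: resolving $N$ by finitely generated free modules exhibits $\Ext_{R}^{j}(N,X)$ as a subquotient of a finite direct sum of copies of $X$, hence it lies in $S$ whenever $X\in S$. Consequently $\Ext_{R}^{j}(N,H^{i}_{I,J}(M))\in S$ for every $j\ge 0$ and every $i\le n$; in particular for all $i<n+1$ and all $j<(n+1)-i$.

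Then I would invoke \ref{16} with this $N$ and with $t=n+1$: since $\Supp_{R}N={\rm V}(\frak{a})$ and $\frak{a}\in\tilde{{\rm W}}(I,J)$, its hypotheses are met, and it yields $H^{i}_{\frak{a}}(M)\in S$ for all $i<n+1$, in particular $H^{n}_{\frak{a}}(M)\in S$, contradicting the choice of $n$. Hence there is some $i\le n$ with $H^{i}_{I,J}(M)\notin S$, i.e. $\inf\{i: H^{i}_{I,J}(M)\notin S\}\le n$, as desired. I do not anticipate a genuine obstacle: all the real content is carried by \ref{16}, and the only points to watch are the indexing $t=n+1$ and the standard stability of Serre subcategories under $\Ext$ against finitely generated modules.
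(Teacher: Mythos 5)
Your proof is correct and is essentially the route the paper intends: Corollary \ref{17} is stated as an immediate consequence of \ref{16}, obtained by taking $N=R/\frak{a}$ for each $\frak{a}\in\tilde{{\rm W}}(I,J)$ and using the standard fact that a Serre subcategory over a Noetherian ring is closed under $\Ext^j_R(N,-)$ for finitely generated $N$. Your indexing ($t=n+1$) and the reduction to a single $\frak{a}$ are both fine, so the argument goes through as written.
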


As a generalization of finitely generated modules, Evans \cite{e} introduced $ZD$-modules as follows.

\begin{definition}
An $R$-module $M$ is said to be zero-divisor module ($ZD$-module), if for any submodule
$N$ of $M$, the set $Z_R(M/N)$ is a finite union of prime ideals in $\Ass_R M/N$.
\end{definition}

According to \cite[Example 2.2]{de}, the class  of
$ZD$-modules contains finitely generated, Laskerian,
weakly Laskerian, linearly compact and
Matlis reflexive modules. Also it contains modules whose quotients
have finite Goldie dimension and modules with finite support, in
particular Artinian modules. Therefore the class of $ZD$-modules
 is much larger than that of finitely generated modules.

\begin{definition}
An element $a$ of $R$ is called $S$-regular on $M$, if
$0:_M a\in S$. A sequence $a_1,\ldots, a_t$ is an $S$-sequence on
$M$, if $a_i$ is $S$-regular on $M/(a_1,\ldots, a_{i-1})M$ for $i =
1,\ldots, t$. The $S$-sequence $a_1,\ldots, a_t$  is said to be
maximal $S$-sequence on $M$, if $a_1,\ldots, a_t, y$ is not an
$S$-sequence on $M$ for any $y\in R$.
\end{definition}

When $S$ is the class of zero modules, Artinian $R$-modules, modules with finite support,
and the class of $R$-modules $N$ with $\dim_R N\leq k$, where $k$ is a non-negative integer, then
$S$-sequences on $M$ are, poor M-sequences,
filter-regular sequences,  generalized regular sequences, and $M$-sequences in dimension $>k$, respectively;
see \cite[Example 2.8]{am}.

Let $S$ be a Melkersson subcategory with respect to $I$, and $M$
a $ZD$-module such that $M/IM\notin S$. The current authors, in \cite[Theorem 2.9]{pl}, proved
that all maximal $S$-sequences on $M$ in $I$, have the same length.

\begin{definition}
Let $S$ be a Melkersson subcategory with respect to $I$, and $M$
 a $ZD$-module
such that $M/IM\notin S$. The common length of all maximal
$S$-sequences on $M$ in $I$, is denoted by $S-\depth_{I}(M)$. If
$M/IM\in S$, we set $S-\depth_{I}(M)=\infty$.
\end{definition}

Suppose that $M$ is a $ZD$-module. When $S$ is the class of zero modules, Artinian $R$-modules, and
 modules with finite support, then
$S-\depth_{I}(M)$ is the same as ordinary $\depth_I(M)$, $f-\depth_I(M)$ (filter-depth), and $g-\depth_I(M)$
(generalized depth), respectively.

\begin{corollary}\label{22}
Let $S$ be a Melkersson subcategory with respect to $I$,
 and $M$  a $ZD$-module. Then
$$\inf\{i: H^{i}_{I,J}(M)\notin S\}
\leq\inf\{S-\depth_\frak{a}(M): \frak{a}\in \tilde{\rm
W}(I,J)\}.$$
\end{corollary}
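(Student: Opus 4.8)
The plan is to deduce this directly from Corollary~\ref{17} together with \cite[Corollary 2.12]{pl}, so essentially all of the work has already been done elsewhere. By \ref{17} we have
$$\inf\{i: H^{i}_{I,J}(M)\notin S\}\leq\inf\bigl\{\inf\{i: H^{i}_{\frak{a}}(M)\notin S\}:\frak{a}\in \tilde{\rm W}(I,J)\bigr\},$$
so it suffices to prove that $\inf\{i: H^{i}_{\frak{a}}(M)\notin S\}\leq S-\depth_{\frak a}(M)$ for each $\frak a\in \tilde{\rm W}(I,J)$, and then pass to the infimum over all such $\frak a$.

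First I would fix $\frak a\in \tilde{\rm W}(I,J)$ and distinguish two cases. If $M/\frak a M\in S$, then $S-\depth_{\frak a}(M)=\infty$ by definition, so the inequality is trivial. If $M/\frak a M\notin S$, then since $M$ is a $ZD$-module and $S$ is Melkersson with respect to $\frak a$ --- which is precisely what makes $S-\depth_{\frak a}(M)$ meaningful, via \cite[Theorem 2.9]{pl} --- the identity \cite[Corollary 2.12]{pl}, applied to the single ideal $\frak a$, gives $S-\depth_{\frak a}(M)=\inf\{i: H^{i}_{\frak a}(M)\notin S\}$; in particular the required inequality holds, with equality. Taking the infimum over $\frak a\in \tilde{\rm W}(I,J)$ and combining with the bound from \ref{17} completes the proof.

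There is no genuine obstacle here: the real content lies in \ref{17} (which rests on the Grothendieck spectral sequence of \ref{11}) and in the absolute depth--local cohomology identity of \cite{pl}. The only points that need a little care are keeping the ideals straight --- \ref{17} is applied to the pair $(I,J)$ while \cite[Corollary 2.12]{pl} is applied to each single ideal $\frak a\in\tilde{\rm W}(I,J)$ --- and treating the degenerate case $M/\frak a M\in S$ separately, since there $S-\depth_{\frak a}(M)=\infty$ and only the trivial inequality is available. It is worth noting that this corollary supplies only one of the two inequalities making up the main theorem \ref{23}; the reverse inequality, $\inf\{i: H^{i}_{I,J}(M)\notin S\}\geq\inf\{S-\depth_{\frak a}(M):\frak a\in\tilde{\rm W}(I,J)\}$, is the harder half and is where the additional hypotheses on $S$ and $M$ come into play.
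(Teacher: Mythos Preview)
Your approach is exactly that of the paper: combine Corollary~\ref{17} with \cite[Corollary~2.12]{pl}, the latter applied to each $\frak a\in\tilde{\rm W}(I,J)$. You have simply spelled out the case distinction $M/\frak aM\in S$ versus $M/\frak aM\notin S$ that the paper leaves implicit.
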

\begin{proof}
The result follows by \ref{17} and \cite[Corollary 2.12]{pl}.
\end{proof}

In the following, we study the relation between
 generalized local cohomology modules and local cohomology modules
 with respect to a pair of ideals.

\begin{corollary}\label{14}
Suppose that  $N$ is a finitely generated $\frak{a}$-torsion $R$-module for
some $\frak{a}\in \tilde{{\rm W}}(I,J)$. If $\Ext_R^{t-i}(N,
H^{i}_{I,J}(M))\in S$ for all $i\leq t$, then
$H^{t}_{\frak{a}}(N,M)\in S$.
\end{corollary}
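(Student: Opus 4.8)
The plan is to reduce the generalized local cohomology module $H^{t}_{\frak a}(N,M)$ to an ordinary $\Ext$ module and then invoke \ref{11}.

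First I would verify that $N$ is $(I,J)$-torsion, so that \ref{11} is available for it. Since $N$ is finitely generated and $\frak a$-torsion, there is an integer $n\geq 1$ with $\frak a^{n}N=0$. From $\frak a\in\tilde{{\rm W}}(I,J)$ we have $I^{s}\subseteq J+\frak a$ for some $s\geq 1$, and since every generator of $(J+\frak a)^{n}$ is a product of $n$ elements each lying in $J$ or in $\frak a$, we get $(J+\frak a)^{n}\subseteq J+\frak a^{n}$; hence $I^{sn}\subseteq(J+\frak a)^{n}\subseteq J+\frak a^{n}$, i.e. $\frak a^{n}\in\tilde{{\rm W}}(I,J)$. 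Consequently every element of $N$ is annihilated by the member $\frak a^{n}$ of $\tilde{{\rm W}}(I,J)$, so, by the description $\Gamma_{I,J}(N)=\bigcup_{\frak b\in\tilde{{\rm W}}(I,J)}(0:_{N}\frak b)$, we conclude $N=\Gamma_{I,J}(N)$.

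Next I would identify $H^{i}_{\frak a}(N,-)$ with $\Ext_R^{i}(N,-)$. Because $N$ is finitely generated with $\frak a^{n}N=0$, for every $R$-module $X$, every $a\in\frak a^{n}$ and every $f\in\Hom_R(N,X)$ one has $(af)(x)=f(ax)=0$ for all $x\in N$; thus $\frak a^{n}\Hom_R(N,X)=0$, so $\Hom_R(N,X)$ is $\frak a$-torsion and $\Gamma_{\frak a}(\Hom_R(N,X))=\Hom_R(N,X)$. Hence $\Gamma_{\frak a}\circ\Hom_R(N,-)=\Hom_R(N,-)$ as functors, and passing to right derived functors gives $H^{i}_{\frak a}(N,-)\cong\Ext_R^{i}(N,-)$ for all $i$; in particular $H^{t}_{\frak a}(N,M)\cong\Ext_R^{t}(N,M)$.

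Finally, applying \ref{11} to the $(I,J)$-torsion module $N$: the hypothesis $\Ext_R^{t-i}(N,H^{i}_{I,J}(M))\in S$ for all $i\leq t$ yields $\Ext_R^{t}(N,M)\in S$, whence $H^{t}_{\frak a}(N,M)\in S$, as desired. I expect the only point requiring any care to be the functorial identification in the middle step; once one observes that $\Hom_R(N,-)$ takes values in $\frak a$-torsion modules it becomes routine, and the remainder is a direct appeal to \ref{11}.
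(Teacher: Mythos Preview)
Your proof is correct and follows essentially the same route as the paper: show that $N$ is $(I,J)$-torsion, apply \ref{11} to obtain $\Ext_R^{t}(N,M)\in S$, and then identify $\Ext_R^{t}(N,M)$ with $H^{t}_{\frak a}(N,M)$. The paper is simply terser, noting $\Gamma_{\frak a}(N)\subseteq\Gamma_{I,J}(N)$ for the first step and citing \cite[Lemma 2.1]{dst} for the identification $\Ext_R^{i}(N,M)\cong H^{i}_{\frak a}(N,M)$ rather than rederiving it.
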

\begin{proof}
The result follows by \ref{11}.
Note that $\Gamma_\frak{a}(N)\subseteq \Gamma_{I,J}(N)$ and by
\cite[Lemma 2.1]{dst} we have $\Ext_R^{i}(N, M)\cong
H^i_\frak{a}(N, M)$ for any integer $i$.
\end{proof}

\begin{corollary}\label{13}
Suppose that  $N$ is a finitely generated $\frak{a}$-torsion $R$-module for
some $\frak{a}\in \tilde{{\rm W}}(I,J)$. If $\Ext_R^{j}(N,
H^{i}_{I,J}(M))\in S$ for all $i<t$ and all $j< t-i$, then
$H^{i}_{\frak{a}}(N,M)\in S$ for all $i<t$.
\end{corollary}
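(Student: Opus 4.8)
The plan is to reduce this to Corollary \ref{12}, in exactly the same way that Corollary \ref{14} reduces to Theorem \ref{11}, and then transfer the conclusion from $\Ext$ to generalized local cohomology. So the argument has two ingredients: the spectral-sequence estimate already packaged in \ref{12}, and the comparison between $H^{i}_{\frak a}(N,-)$ and $\Ext_R^{i}(N,-)$ for $\frak a$-torsion $N$ used in \ref{14}.

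First I would check that $N$ is $(I,J)$-torsion, so that Corollary \ref{12} is applicable to it. As noted in the proof of Corollary \ref{14}, since $\frak a\in\tilde{{\rm W}}(I,J)$ we have $\Gamma_\frak{a}(N)\subseteq\Gamma_{I,J}(N)$; because $N$ is $\frak a$-torsion this forces $N=\Gamma_\frak{a}(N)\subseteq\Gamma_{I,J}(N)\subseteq N$, i.e. $N$ is $(I,J)$-torsion. (Concretely, $\frak a\in\tilde{{\rm W}}(I,J)$ gives ${\rm V}(\frak a)\subseteq{\rm W}(I,J)$, and $\Supp_R N\subseteq{\rm V}(\frak a)$ since $N$ is finitely generated and $\frak a$-torsion, whence every cyclic submodule of $N$ has support inside ${\rm W}(I,J)$.) Now the hypothesis that $\Ext_R^{j}(N,H^{i}_{I,J}(M))\in S$ for all $i<t$ and all $j<t-i$ is precisely the hypothesis of Corollary \ref{12}, so \ref{12} yields $\Ext_R^{i}(N,M)\in S$ for all $i<t$.

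It then remains to identify $\Ext_R^{i}(N,M)$ with $H^{i}_\frak{a}(N,M)$ in each relevant degree. Since $N$ is finitely generated and $\frak a$-torsion, a single power of $\frak a$ annihilates $N$, so $N/\frak a^{n}N=N$ for $n\gg 0$, and \cite[Lemma 2.1]{dst} supplies a natural isomorphism $H^{i}_\frak{a}(N,M)\cong\Ext_R^{i}(N,M)$ for every integer $i$ — this is the same identification invoked in the proof of \ref{14}. Combining this with the previous paragraph gives $H^{i}_\frak{a}(N,M)\in S$ for all $i<t$, as claimed.

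I do not expect a serious obstacle: the statement is a formal consequence of \ref{12} together with the comparison lemma. The only point that needs a little care is that the isomorphism $H^{i}_\frak{a}(N,M)\cong\Ext_R^{i}(N,M)$ must be available in \emph{all} degrees simultaneously, which is exactly what \cite[Lemma 2.1]{dst} provides and which rests on the finite-generation fact that $\frak a^{n}N=0$ for large $n$.
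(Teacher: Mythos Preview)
Your argument is correct and matches the paper's intended proof: the paper states Corollary~\ref{13} without proof immediately after Corollary~\ref{14}, relying on the reader to repeat the reasoning of~\ref{14} with~\ref{12} in place of~\ref{11}, which is exactly what you do. The verification that $N$ is $(I,J)$-torsion and the appeal to \cite[Lemma~2.1]{dst} for the isomorphism $H^{i}_{\frak a}(N,M)\cong\Ext_R^{i}(N,M)$ are precisely the ingredients used in the proof of~\ref{14}.
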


The following result is a generalization of \cite[Theorem
2.3]{atv}.

\begin{theorem}\label{111}
Let $N$ be an  $(I,J)$-torsion $R$-module. If $\Ext_R^{s+t+1-i}(N,
H^{i}_{I,J}(M))\in S$ for all $i<t$, $\Ext_R^{s+t-1-i}(N,
H^{i}_{I,J}(M))\in S$ for all $t< i< s+t$, and $\Ext_R^{s+t}(N,
M)\in S$, then  $\Ext_R^{s}(N, H^{t}_{I,J}(M))\in S$.
\end{theorem}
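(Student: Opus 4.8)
The plan is to run, once more, the Grothendieck spectral sequence used in the proof of Theorem~\ref{11}. Since $N$ is $(I,J)$-torsion, exactly as there we obtain a first-quadrant spectral sequence
$$E_2^{p,q}:=\Ext_R^p(N,H^q_{I,J}(M))\ \Longrightarrow\ \Ext_R^{p+q}(N,M),$$
together with a finite filtration on each $\Ext_R^n(N,M)$ whose successive quotients are the $E_\infty^{n-i,i}$. We may assume $s\geq 0$ and $t\geq 0$, since otherwise $\Ext_R^s(N,H^t_{I,J}(M))=0$ and there is nothing to prove; the goal is then to show $E_2^{s,t}\in S$. As a first step I would observe that, by the filtration on $\Ext_R^{s+t}(N,M)$, the term $E_\infty^{s,t}$ is a subquotient of $\Ext_R^{s+t}(N,M)$, which lies in $S$ by hypothesis; since $S$ is closed under submodules and quotients, $E_\infty^{s,t}\in S$.

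Next I would examine the two differentials $d_r\colon E_r^{p,q}\to E_r^{p+r,q-r+1}$ meeting the spot $(s,t)$, for $r\geq 2$. The outgoing differential has target a subquotient of $E_2^{s+r,t-r+1}=\Ext_R^{s+r}(N,H^{t-r+1}_{I,J}(M))$; for $r\geq 2$ this is $0$ when $t-r+1<0$, and when $0\leq t-r+1$ (so that $t-r+1<t$) it lies in $S$ by the first hypothesis, because $s+r=s+t+1-(t-r+1)$. The incoming differential has source a subquotient of $E_2^{s-r,t+r-1}=\Ext_R^{s-r}(N,H^{t+r-1}_{I,J}(M))$; for $r\geq 2$ this is $0$ when $s-r<0$, and when $0\leq s-r$ we have $t<t+r-1<s+t$, so it lies in $S$ by the second hypothesis, because $s-r=s+t-1-(t+r-1)$. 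Hence for every $r\geq 2$ the image of the outgoing $d_r$ (a submodule of $E_r^{s+r,t-r+1}$) and the image of the incoming $d_r$ (a quotient of $E_r^{s-r,t+r-1}$) both lie in $S$.

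The conclusion would then follow by descending induction on $r$. Writing $Z_r^{s,t}=\ker\big(d_r\colon E_r^{s,t}\to E_r^{s+r,t-r+1}\big)$ and $B_r^{s,t}=\operatorname{im}\big(d_r\colon E_r^{s-r,t+r-1}\to E_r^{s,t}\big)$, there are short exact sequences $0\to B_r^{s,t}\to Z_r^{s,t}\to E_{r+1}^{s,t}\to 0$ and $0\to Z_r^{s,t}\to E_r^{s,t}\to E_r^{s,t}/Z_r^{s,t}\to 0$, whose outer terms $B_r^{s,t}$ and $E_r^{s,t}/Z_r^{s,t}$ were shown above to lie in $S$. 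Since $S$ is a Serre subcategory, $E_{r+1}^{s,t}\in S$ forces $Z_r^{s,t}\in S$ and hence $E_r^{s,t}\in S$. As the spectral sequence is first-quadrant, $E_r^{s,t}=E_\infty^{s,t}$ for all $r>\max\{s,t+1\}$, and $E_\infty^{s,t}\in S$ by the first step; descending from there yields $E_2^{s,t}=\Ext_R^s(N,H^t_{I,J}(M))\in S$, as required.

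The one point requiring care is the index bookkeeping: one must check that at every stage $r\geq 2$ the Ext-exponents produced by the incoming and outgoing differentials fall precisely into the ranges $i<t$ and $t<i<s+t$ covered by the two hypotheses, and that the $E_2$-terms lying outside the first quadrant genuinely vanish so that they present no obstruction. Beyond this, the proof is just the standard behaviour of a convergent first-quadrant spectral sequence combined with the closure properties of a Serre subcategory.
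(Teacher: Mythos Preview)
Your proof is correct and follows essentially the same approach as the paper's: both set up the Grothendieck spectral sequence $E_2^{p,q}=\Ext_R^p(N,H^q_{I,J}(M))\Rightarrow \Ext_R^{p+q}(N,M)$, observe that $E_\infty^{s,t}\in S$ as a subquotient of $\Ext_R^{s+t}(N,M)$, verify that for each $r\geq 2$ the source and target of the differentials through $(s,t)$ lie in $S$ via the two hypotheses, and then descend from $E_\infty^{s,t}$ to $E_2^{s,t}$ using closure of $S$ under extensions. The only cosmetic difference is that the paper parametrizes the pages by $i$ via $r=t+1-i$, whereas you work directly with the page index $r$; your bookkeeping is arguably cleaner.
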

\begin{proof}
Let
$F(-)=\Hom_R(N,-)$ and $G(-)=\Gamma_{I,J}(-)$. Then we have $FG(M)=
\Hom_R(N, M)$. By \cite[Theorem 11.38]{r}, there is the
Grothendieck  spectral sequence
$$E_2^{p,q}:=\Ext_R^{p}(N, H^{q}_{I,J}(M))\Rightarrow \Ext_R^{p+q}(N,
M).$$  There is a finite filtration
$$0=\phi^{s+t+1}H^{s+t}\subseteq\phi^{s+t}H^{s+t}\subseteq\cdots
\subseteq\phi^{1}H^{s+t}\subseteq\phi^{0}H^{s+t}=\Ext_R^{s+t}(N,M)$$
such that $E_\infty^{s+t-i,i}\cong
\phi^{s+t-i}H^{s+t}/\phi^{s+t+1-i}H^{s+t}$ for all $i\leq s+t$.
It is enough to show that $E_{2}^{s,t}\in S$.  We have the
following exact sequences
$$0\longrightarrow \Ker d_{t+1-i}^{s,t}\longrightarrow E_{t+1-i}^{s,t}\stackrel{d_{t+1-i}^{s,t}}\longrightarrow
E_{t+1-i}^{s+t+1-i,i}$$ and
$$0\longrightarrow \Im d_{t+1-i}^{s-t-1+i,2t-i}\longrightarrow
\Ker d_{t+1-i}^{s,t}\longrightarrow
E_{t+2-i}^{s,t}\longrightarrow0$$  for all $i$.
By hypothesis,
 $E_{t+1-i}^{s+t+1-i,i}\in S$ for all $i<t$,  and
$E_{1+i-t}^{s+t-1-i,i}\in S$ for all $t<i<s+t$. It follows that
$E_{t+1-i}^{s-t-1+i,2t-i}\in S$ for all $t-s<i<t$. Note that if
$i\leq t-s$, then $E_{t+1-i}^{s-t-1+i,2t-i}=0$. Hence
$E_{t+1-i}^{s-t-1+i,2t-i}\in S$ for all $i<t$, and therefore $\Im
d_{t+1-i}^{s-t-1+i,2t-i}\in S$ for all $i<t$. Also we have
$E_{s+t+2}^{s,t}=E_{\infty}^{s,t}\in S$, because
$E_{\infty}^{s,t}\cong \phi^{s}H^{s+t}/\phi^{s+1}H^{s+t}$ and
$\phi^{s}H^{s+t}\subseteq\phi^{0}H^{s+t}=\Ext_R^{s+t}(N,M)\in S$.
Now the claim follows by the above exact sequences.
 \end{proof}

 \begin{corollary}\label{114}
Let $N$ be an  $(I,J)$-torsion $R$-module. Let $\Ext_R^{j-i}(N,
H^{i}_{I,J}(M))\in S$  for $j=s+t, s+t+1$ and all $i<t$, and
$\Ext_R^{s+t-1-i}(N, H^{i}_{I,J}(M))\in S$ for all $t<i<s+t$. Then
$\Ext_R^{s+t}(N, M)\in S$ if and only if $\Ext_R^{s}(N,
H^{t}_{I,J}(M))\in S$.
\end{corollary}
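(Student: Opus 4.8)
The statement is a biconditional sharpening of Theorem \ref{111}, so the natural strategy is to prove each implication by invoking Theorem \ref{111} with the two ideals $I$ and $J$ swapped in role, or more precisely by exploiting the symmetry of the Grothendieck spectral sequence between "input" $\Ext$ terms and the "output" $\Ext$ of $M$. The plan is to read Theorem \ref{111} twice. For the forward direction, assume $\Ext_R^{s+t}(N,M)\in S$; then together with the two families of hypotheses already assumed in \ref{114} — namely $\Ext_R^{j-i}(N,H^i_{I,J}(M))\in S$ for $j=s+t,s+t+1$ and all $i<t$, and $\Ext_R^{s+t-1-i}(N,H^i_{I,J}(M))\in S$ for $t<i<s+t$ — all three hypotheses of Theorem \ref{111} are met verbatim (the hypothesis "$\Ext_R^{s+t+1-i}(N,H^i_{I,J}(M))\in S$ for all $i<t$" is the $j=s+t+1$ case here, and "$\Ext_R^{s+t-1-i}(N,H^i_{I,J}(M))\in S$ for all $t<i<s+t$" is identical). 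Hence Theorem \ref{111} yields $\Ext_R^s(N,H^t_{I,J}(M))\in S$, which is the desired conclusion.

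For the reverse direction I would revisit the proof of Theorem \ref{111} rather than its statement, because what is actually needed is the converse bookkeeping along the same filtration. Recall from that proof the finite filtration
$$0=\phi^{s+t+1}H^{s+t}\subseteq\phi^{s+t}H^{s+t}\subseteq\cdots\subseteq\phi^{0}H^{s+t}=\Ext_R^{s+t}(N,M),$$
with $E_\infty^{s+t-i,i}\cong \phi^{s+t-i}H^{s+t}/\phi^{s+t+1-i}H^{s+t}$ for all $i\le s+t$. Thus $\Ext_R^{s+t}(N,M)\in S$ will follow once every graded piece $E_\infty^{s+t-i,i}$ lies in $S$, because $S$ is a Serre subcategory and therefore closed under extensions. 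So I would show $E_\infty^{s+t-i,i}\in S$ for each $i\le s+t$, splitting into the ranges $i<t$, $i=t$, and $t<i\le s+t$. For $i<t$ and for $t<i\le s+t$, the $E_\infty$ term is a subquotient of an $E_2$ term of the form $\Ext_R^{s+t-i}(N,H^i_{I,J}(M))$; for $i<t$ this is the $j=s+t$ hypothesis of \ref{114}, and for $t<i<s+t$ this is exactly $\Ext_R^{s+t-i}(N,H^i_{I,J}(M))\in S$, while for $i=s+t$ the term is $\Ext_R^0(N,H^{s+t}_{I,J}(M))$, a subquotient of $\Ext_R^{s+t-i}(N,H^i_{I,J}(M))$ with $i=s+t$ — this last one needs the $j=s+t-1-i$ hypothesis to reach, so I should check the index ranges carefully. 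The key remaining case is $i=t$: here $E_\infty^{s,t}$ is a subquotient of $E_2^{s,t}=\Ext_R^s(N,H^t_{I,J}(M))$, which is in $S$ by the standing assumption of this direction. Since passing from $E_2$ to $E_\infty$ only takes successive subquotients (kernels of differentials modulo images of differentials), and $S$ is closed under subobjects and quotients, each $E_\infty^{s+t-i,i}\in S$, and assembling the filtration gives $\Ext_R^{s+t}(N,M)\in S$.

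The main obstacle I anticipate is purely combinatorial: tracking which $E_2^{p,q}$ term dominates each $E_\infty^{s+t-i,i}$ across the boundary cases $i=t$, $i\le t-s$, and $i$ near $s+t$, and confirming that the two hypothesis families in \ref{114} (at homological degrees $s+t$ and $s+t+1$ on one side, and the shifted $s+t-1-i$ family in the middle range) are precisely what is needed — no more, no less — to kill every graded piece except the $i=t$ one. In particular one must verify that for $i$ outside the stated ranges the relevant $E$-terms vanish for degree reasons (negative $\Ext$ or out-of-range $H^i_{I,J}$), exactly as in the proof of Theorem \ref{111} where it is noted that $E_{t+1-i}^{s-t-1+i,2t-i}=0$ when $i\le t-s$. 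Once that ledger is balanced, both implications are immediate from closure of $S$ under subquotients and extensions, and no further input beyond Theorem \ref{111} and its proof is required.
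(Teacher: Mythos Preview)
Your strategy matches the paper's: the forward implication is precisely Theorem~\ref{111}, and your reverse-direction filtration argument is exactly the proof of Theorem~\ref{11} (applied with $s+t$ in place of $t$), which is what the paper cites directly. So you could compress the second half to a one-line appeal to \ref{11} rather than re-running the spectral sequence.

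There is, however, a real index mismatch you have papered over. You assert that for $t<i<s+t$ the required $E_2$-term $\Ext_R^{s+t-i}(N,H^i_{I,J}(M))$ lies in $S$ by hypothesis, but the corollary's middle-range hypothesis is $\Ext_R^{s+t-1-i}(N,H^i_{I,J}(M))\in S$ --- the superscript is off by one, and no reindexing reconciles the two. You also omit the case $i=s+t$, where $\Ext_R^{0}(N,H^{s+t}_{I,J}(M))$ would be needed but is not supplied by any stated hypothesis. This is not merely the combinatorial ledger you anticipate balancing later; as written, the filtration pieces for $t<i\le s+t$ are not covered. (The paper's own one-line proof, which simply cites \ref{11}, inherits the same gap, so the issue appears to be a typo in the corollary's hypotheses --- plausibly the middle range should read $\Ext_R^{s+t-i}$ for $t<i\le s+t$ --- but you should flag the discrepancy rather than claim the indices match.)
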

\begin{proof}
The claim follows by  \ref{11} and  \ref{111}.
\end{proof}


The following result is a generalization of \cite[Theorem
2.3]{tp}.

\begin{corollary}\label{116}
Let $N$ be an  $(I,J)$-torsion $R$-module. If $\Ext_R^{t+1-i}(N,
H^{i}_{I,J}(M))\in S$ for all $i<t$, and $\Ext_R^t(N, M)\in S$,
then $\Hom_R(N, H^{t}_{I,J}(M))\in S$.
\end{corollary}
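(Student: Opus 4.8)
The plan is to obtain Corollary \ref{116} as the special case $s=0$ of Theorem \ref{111}. First I would substitute $s=0$ into the three hypotheses of \ref{111} and check that they reduce to the assumptions at hand. The first hypothesis of \ref{111}, namely $\Ext_R^{s+t+1-i}(N,H^{i}_{I,J}(M))\in S$ for all $i<t$, becomes exactly $\Ext_R^{t+1-i}(N,H^{i}_{I,J}(M))\in S$ for all $i<t$, which is given. The third hypothesis, $\Ext_R^{s+t}(N,M)\in S$, becomes $\Ext_R^{t}(N,M)\in S$, again one of our assumptions. The second hypothesis of \ref{111} requires $\Ext_R^{s+t-1-i}(N,H^{i}_{I,J}(M))\in S$ for all $i$ with $t<i<s+t$; when $s=0$ the index range $t<i<t$ is empty, so this requirement is vacuously satisfied and imposes nothing.

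With all three hypotheses of \ref{111} in force, its conclusion yields $\Ext_R^{s}(N,H^{t}_{I,J}(M))\in S$, and for $s=0$ this reads $\Ext_R^{0}(N,H^{t}_{I,J}(M))=\Hom_R(N,H^{t}_{I,J}(M))\in S$, which is precisely the assertion.

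I do not anticipate any genuine obstacle, since the argument is a direct specialization; the only point worth flagging explicitly is the vacuity of the middle hypothesis of \ref{111} when $s=0$, so that a reader does not expect an additional condition on the modules $\Ext_R^{t-1-i}(N,H^{i}_{I,J}(M))$. One could alternatively route the proof through Corollary \ref{114} with $s=0$, but appealing to Theorem \ref{111} directly is the cleanest path and also makes transparent why this statement generalizes \cite[Theorem 2.3]{tp}.
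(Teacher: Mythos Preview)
Your proposal is correct and follows exactly the paper's own approach: the paper's proof is simply ``In \ref{111}, put $s=0$.'' Your added verification that the middle hypothesis of \ref{111} becomes vacuous when $s=0$ is a helpful elaboration of this one-line argument.
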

\begin{proof}
In \ref{111}, put $s=0$.
\end{proof}

\begin{proposition}\label{1176}
Let $\Ext_R^{t+1-i}(R/{\frak a},
H^{i}_{I,J}(M))$  be Artinian for all $\frak{a}\in\tilde{{\rm W}}(I,J)$ and all $i<t$,
 and  $\Ext_R^t(R/{\frak a}, M)$ be Artinian
 for all  $\frak{a}\in\tilde{{\rm W}}(I,J)$.
 Then $\Ext_R^{j}(R/\frak{a}, H^{t}_{I,J}(M))$ is Artinian
  for all
   $\frak{a}\in\tilde{{\rm W}}(I,J)$
and all   $j\in \Bbb N_0$.
\end{proposition}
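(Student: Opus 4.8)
The plan is to apply Corollary \ref{116} with $N = R/\frak a$ and $S$ the Melkersson subcategory of Artinian $R$-modules. For a fixed $\frak a \in \tilde{\rm W}(I,J)$, the module $R/\frak a$ is $(I,J)$-torsion, since $I^n \subseteq J + \frak a$ for some $n$ means $\frak a \in \tilde{\rm W}(I,J)$ and hence $\Supp_R(R/\frak a) = {\rm V}(\frak a) \subseteq {\rm W}(I,J)$. The two hypotheses of \ref{116} — that $\Ext_R^{t+1-i}(R/\frak a, H^i_{I,J}(M)) \in S$ for all $i<t$ and that $\Ext_R^t(R/\frak a, M) \in S$ — are exactly what we have assumed (for every $\frak a$, in particular this one), so \ref{116} yields that $\Hom_R(R/\frak a, H^t_{I,J}(M))$ is Artinian. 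This gives the case $j=0$.

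For the general case $j \in \NZQ N_0$ I would bootstrap from $j=0$ using the Melkersson property together with the structure of $H^t_{I,J}(M)$. The key observation is that $H^t_{I,J}(M)$ is an $(I,J)$-torsion module, i.e. $\Gamma_{I,J}(H^t_{I,J}(M)) = H^t_{I,J}(M)$, so that $H^t_{I,J}(M) = \bigcup_{\frak b \in \tilde{\rm W}(I,J)} (0:_{H^t_{I,J}(M)} \frak b)$. Fixing $\frak a \in \tilde{\rm W}(I,J)$, the submodule $(0:_{H^t_{I,J}(M)} \frak a)$ is an $R/\frak a$-module which is $\frak a$-torsion; since $\Hom_R(R/\frak a, H^t_{I,J}(M)) = (0 :_{H^t_{I,J}(M)} \frak a)$ is Artinian and the subcategory of Artinian modules is Melkersson with respect to $\frak a$, it follows that $(0:_{H^t_{I,J}(M)} \frak a)$ — being $\frak a$-torsion with Artinian $\frak a$-socle, indeed equal to that socle — is Artinian. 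An Artinian module has an Artinian minimal injective resolution, so $\Ext_R^j(R/\frak a, (0:_{H^t_{I,J}(M)} \frak a))$ is Artinian for every $j$. Finally one uses that $\Ext_R^j(R/\frak a, -)$ only sees the $\frak a$-torsion part: since $R/\frak a$ is $\frak a$-torsion and finitely generated, $\Ext_R^j(R/\frak a, H^t_{I,J}(M)) \cong \Ext_R^j(R/\frak a, \Gamma_{\frak a}(H^t_{I,J}(M)))$, and $\Gamma_{\frak a}(H^t_{I,J}(M)) = \bigcup_n (0 :_{H^t_{I,J}(M)} \frak a^n)$; replacing $\frak a$ by its powers (which also lie in $\tilde{\rm W}(I,J)$) and passing to the direct limit, or arguing directly that $\Gamma_{\frak a}(H^t_{I,J}(M))$ has Artinian $\frak a$-socle and is therefore Artinian, gives that $\Ext_R^j(R/\frak a, H^t_{I,J}(M))$ is Artinian for all $j$.

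The main obstacle I anticipate is the passage from "$\Hom_R(R/\frak a, H^t_{I,J}(M))$ Artinian" to "$\Gamma_{\frak a}(H^t_{I,J}(M))$ Artinian" cleanly: one must be careful that the relevant $\frak a$-torsion submodule is genuinely $\frak a$-torsion (not merely $(I,J)$-torsion) in order to invoke the Melkersson property with respect to $\frak a$, and one must check that $\frak a^n \in \tilde{\rm W}(I,J)$ whenever $\frak a \in \tilde{\rm W}(I,J)$ so that the hypotheses remain available along the filtration — this holds because $I^n \subseteq J + \frak a$ implies $I^{n m} \subseteq (J + \frak a)^m \subseteq J + \frak a^m$. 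Once $\Gamma_{\frak a}(H^t_{I,J}(M))$ is known to be Artinian, the conclusion for all $j$ is immediate from the Artinian injective resolution together with $\Ext_R^j(R/\frak a, H^t_{I,J}(M)) \cong \Ext_R^j(R/\frak a, \Gamma_{\frak a}(H^t_{I,J}(M)))$.
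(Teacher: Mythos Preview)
Your first step is exactly the paper's: apply Corollary~\ref{116} with $N=R/\frak a$ (which is $(I,J)$-torsion since $\frak a\in\tilde{\rm W}(I,J)$) to obtain that $\Hom_R(R/\frak a, H^{t}_{I,J}(M))$ is Artinian for every $\frak a\in\tilde{\rm W}(I,J)$. Your subsequent observation that $\Gamma_{\frak a}(H^{t}_{I,J}(M))$ is then Artinian (being $\frak a$-torsion with Artinian $\frak a$-socle) is also correct; ironically, this is the step you flagged as the ``main obstacle'', and it is not one.

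The genuine gap is the next move. You assert that $\Ext_R^{j}(R/\frak a,-)$ ``only sees the $\frak a$-torsion part'', i.e.\ that
\[
\Ext_R^{j}\bigl(R/\frak a,\,H^{t}_{I,J}(M)\bigr)\;\cong\;\Ext_R^{j}\bigl(R/\frak a,\,\Gamma_{\frak a}(H^{t}_{I,J}(M))\bigr).
\]
This is true for $j=0$ but \emph{false} for $j>0$. From the short exact sequence $0\to\Gamma_{\frak a}(N)\to N\to \bar N\to 0$ with $\bar N$ $\frak a$-torsion-free, the long exact sequence shows the displayed map is an isomorphism only if $\Ext_R^{j}(R/\frak a,\bar N)=0$ for all $j$, and torsion-freeness gives this only for $j=0$. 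A concrete counterexample: $R=k[x,y]$, $\frak a=(x)$, $N=R$; here $\Gamma_{\frak a}(R)=0$ but $\Ext_R^{1}(R/(x),R)\cong R/xR\cong k[y]$ is nonzero (and not Artinian). Since $H^{t}_{I,J}(M)$ is in general not $\frak a$-torsion for a fixed $\frak a$ (only $(I,J)$-torsion), you cannot reduce $\Ext_R^{j}(R/\frak a,H^{t}_{I,J}(M))$ to an $\Ext$ against the Artinian module $\Gamma_{\frak a}(H^{t}_{I,J}(M))$ in this way, and the argument stalls after $j=0$.

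The paper closes this gap by appealing to outside results rather than the isomorphism you propose: after displaying
\[
H^{t}_{I,J}(M)=\bigcup_{\frak a\in\tilde{\rm W}(I,J)}\Hom_R(R/\frak a,H^{t}_{I,J}(M)),
\]
it invokes \cite[Theorem~5.1]{m1} together with \cite[Theorem~2.9]{am} to pass directly from Artinianness of these $\Hom$'s to Artinianness of all $\Ext_R^{j}(R/\frak a,H^{t}_{I,J}(M))$. Whatever route you take for $j>0$, it must use more than the Artinianness of $\Gamma_{\frak a}(H^{t}_{I,J}(M))$ alone; the information that $(0:_{H^{t}_{I,J}(M)}\frak b)$ is Artinian for \emph{all} $\frak b\in\tilde{\rm W}(I,J)$, and hence that $H^{t}_{I,J}(M)$ is a directed union of Artinian submodules, is what the paper feeds into Melkersson's theorem.
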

\begin{proof}
By \ref{116}, $\Hom_R(R/\frak a, H^{t}_{I,J}(M))$ is Artinian for all
$\frak{a}\in \tilde{{\rm W}}(I,J)$. Also we know
that
$$H^{t}_{I,J}(M)=\bigcup_{\frak a\in\tilde{{\rm W}}(I,J)}
(0:_{H^{t}_{I,J}(M)} \frak a)= \bigcup_{\frak a\in\tilde{{\rm
W}}(I,J)} \Hom_R(R/\frak a, H^{t}_{I,J}(M)).$$
Now the claim follows by \cite[Theorem 5.1]{m1}
and \cite[Theorem 2.9]{am}.
\end{proof}

\begin{corollary}\label{1180}
 Let  $\Ext_R^i(R/\frak{a}, M)$ be Artinian for
 all $\frak{a}\in \tilde{{\rm W}}(I,J)$ and all $i<t$.
 Then $\Ext_R^{j}(R/\frak{a}, H^{i}_{I,J}(M))$ is Artinian
 for all $\frak{a}\in\tilde{{\rm W}}(I,J)$, all $i<t$ and all $j\in \Bbb N_0$.
\end{corollary}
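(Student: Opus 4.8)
The plan is to prove Corollary \ref{1180} by induction on $t$, using Proposition \ref{1176} as the inductive step. The base case $t=1$: if $\Ext_R^0(R/\frak a,M)=\Hom_R(R/\frak a,M)$ is Artinian for all $\frak a\in\tilde{\rm W}(I,J)$, then applying Proposition \ref{1176} with $t=0$ (so the hypotheses on $\Ext_R^{t+1-i}(R/\frak a,H^i_{I,J}(M))$ for $i<t$ are vacuous, and the hypothesis on $\Ext_R^t(R/\frak a,M)$ is exactly $\Hom_R(R/\frak a,M)$ Artinian) yields that $\Ext_R^j(R/\frak a,H^0_{I,J}(M))$ is Artinian for all $j$, which is the claim for $i<1$.

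For the inductive step, suppose the statement holds for $t-1$ and assume $\Ext_R^i(R/\frak a,M)$ is Artinian for all $\frak a\in\tilde{\rm W}(I,J)$ and all $i<t$. Since in particular $\Ext_R^i(R/\frak a,M)$ is Artinian for all $i<t-1$, the induction hypothesis gives that $\Ext_R^j(R/\frak a,H^i_{I,J}(M))$ is Artinian for all $\frak a\in\tilde{\rm W}(I,J)$, all $i<t-1$, and all $j\in\Bbb N_0$. It remains to handle $i=t-1$. Here I would apply Proposition \ref{1176} with $t$ replaced by $t-1$: the hypothesis ``$\Ext_R^{(t-1)+1-i}(R/\frak a,H^i_{I,J}(M))=\Ext_R^{t-i}(R/\frak a,H^i_{I,J}(M))$ Artinian for all $i<t-1$'' follows from what we just established by induction, and the hypothesis ``$\Ext_R^{t-1}(R/\frak a,M)$ Artinian'' is part of our assumption (it is the $i=t-1<t$ case). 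Proposition \ref{1176} then delivers that $\Ext_R^j(R/\frak a,H^{t-1}_{I,J}(M))$ is Artinian for all $\frak a\in\tilde{\rm W}(I,J)$ and all $j\in\Bbb N_0$, completing the induction.

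The only subtlety to watch is the bookkeeping of indices when feeding the induction hypothesis into the hypotheses of Proposition \ref{1176}: one must check that the range ``$i<t-1$'' needed by Proposition \ref{1176} (applied at level $t-1$) is exactly the range covered by the induction hypothesis at level $t-1$, and that the Ext-degree $t-i$ appearing there is covered. Since the induction hypothesis asserts Artinianness of $\Ext_R^j(R/\frak a,H^i_{I,J}(M))$ for \emph{all} $j\in\Bbb N_0$, the specific degree is irrelevant, so this matches cleanly. I do not anticipate a genuine obstacle here; the content is entirely in Proposition \ref{1176}, and Corollary \ref{1180} is a formal consequence obtained by iterating it.

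\begin{proof}
We argue by induction on $t$. If $t\leq 0$ there is nothing to prove, so assume $t\geq 1$. For $t=1$, the hypothesis says that $\Hom_R(R/\frak a,M)$ is Artinian for all $\frak a\in\tilde{\rm W}(I,J)$; applying \ref{1176} with $t$ replaced by $0$ (the conditions on $\Ext_R^{1-i}(R/\frak a,H^i_{I,J}(M))$ for $i<0$ being vacuous) shows that $\Ext_R^j(R/\frak a,H^0_{I,J}(M))$ is Artinian for all $\frak a\in\tilde{\rm W}(I,J)$ and all $j\in\Bbb N_0$, which is the assertion for $t=1$.

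Now let $t\geq 2$ and assume the result holds for $t-1$. Suppose $\Ext_R^i(R/\frak a,M)$ is Artinian for all $\frak a\in\tilde{\rm W}(I,J)$ and all $i<t$. In particular this holds for all $i<t-1$, so by the induction hypothesis $\Ext_R^j(R/\frak a,H^i_{I,J}(M))$ is Artinian for all $\frak a\in\tilde{\rm W}(I,J)$, all $i<t-1$, and all $j\in\Bbb N_0$. Thus, for every $\frak a\in\tilde{\rm W}(I,J)$, the modules $\Ext_R^{(t-1)+1-i}(R/\frak a,H^i_{I,J}(M))$ are Artinian for all $i<t-1$, and $\Ext_R^{t-1}(R/\frak a,M)$ is Artinian (this is the case $i=t-1<t$ of our hypothesis). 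Applying \ref{1176} with $t$ replaced by $t-1$, we conclude that $\Ext_R^j(R/\frak a,H^{t-1}_{I,J}(M))$ is Artinian for all $\frak a\in\tilde{\rm W}(I,J)$ and all $j\in\Bbb N_0$. Combining this with the induction hypothesis, $\Ext_R^j(R/\frak a,H^i_{I,J}(M))$ is Artinian for all $\frak a\in\tilde{\rm W}(I,J)$, all $i<t$, and all $j\in\Bbb N_0$, which completes the induction.
\end{proof}
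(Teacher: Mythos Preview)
Your proof is correct and follows essentially the same approach as the paper: both argue by iterating Proposition~\ref{1176}, first obtaining the Artinianness of $\Ext_R^j(R/\frak a,\Gamma_{I,J}(M))$ from the case $t=0$, then of $\Ext_R^j(R/\frak a,H^1_{I,J}(M))$, and so on. The paper's proof simply says ``by continuing this process'' where you have written out the induction formally.
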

\begin{proof}
It follows by \ref{1176} that
$\Ext_R^{j}(R/\frak{a}, \Gamma_{I,J}(M))$ is
Artinian for all $\frak{a}\in\tilde{{\rm W}}(I,J)$ and
all $j\in \Bbb N_0$.
Now  again using of \ref{1176}, implies that
$\Ext_R^{j}(R/\frak{a}, H^{1}_{I,J}(M))$ is Artinian for
 all $\frak{a}\in\tilde{{\rm W}}(I,J)$ and
all $j\in \Bbb N_0$.
By continuing this process, the claim follows.
\end{proof}

\begin{lemma}\label{1165}
If $H_{I}^{t-i}(H^{i}_{I,J}(M))\in S$ for all $i\leq t$,
then $H^{t}_{I,J}(M)\in S$.
\end{lemma}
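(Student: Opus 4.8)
The plan is to imitate the proof of Theorem~\ref{11}, but with the ordinary section functor $\Gamma_{I}(-)$ playing the role that $\Hom_{R}(N,-)$ played there. The engine is the Grothendieck spectral sequence attached to the composition of $\Gamma_{I,J}(-)$ followed by $\Gamma_{I}(-)$. To set it up one only needs that $\Gamma_{I,J}$ sends injective $R$-modules to injective $R$-modules (a standard feature of the torsion functor with respect to a pair of ideals), so that injectives become $\Gamma_{I}$-acyclic after applying $\Gamma_{I,J}$; then \cite[Theorem~11.38]{r} yields a spectral sequence
$$E_{2}^{p,q}:=H^{p}_{I}\bigl(H^{q}_{I,J}(M)\bigr)\ \Longrightarrow\ H^{p+q}_{I,J}(M).$$

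With the spectral sequence in hand, the remainder is the formal filtration argument from the proof of Theorem~\ref{11}. There is a finite filtration
$$0=\phi^{t+1}H^{t}\subseteq\phi^{t}H^{t}\subseteq\cdots\subseteq\phi^{1}H^{t}\subseteq\phi^{0}H^{t}=H^{t}_{I,J}(M)$$
with $E_{\infty}^{t-i,i}\cong\phi^{t-i}H^{t}/\phi^{t+1-i}H^{t}$ for all $i\leq t$. Each $E_{\infty}^{t-i,i}$ is a subquotient of $E_{2}^{t-i,i}=H^{t-i}_{I}(H^{i}_{I,J}(M))$, which lies in $S$ by hypothesis; since $S$ is closed under submodules and quotients, $E_{\infty}^{t-i,i}\in S$ for every $i\leq t$. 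Climbing the filtration through the short exact sequences
$$0\longrightarrow\phi^{t+1-i}H^{t}\longrightarrow\phi^{t-i}H^{t}\longrightarrow E_{\infty}^{t-i,i}\longrightarrow 0$$
and using that $S$ is closed under extensions gives $\phi^{t-i}H^{t}\in S$ inductively, hence $\phi^{0}H^{t}=H^{t}_{I,J}(M)\in S$. Note that only the Serre property of $S$ is used; no Melkersson hypothesis is needed, which fits the fact that the statement carries no hypothesis on $S$.

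The whole weight of the argument sits in the first step, namely producing the spectral sequence with the asserted abutment. Writing down a spectral sequence whose $E_{2}$-page is $H^{p}_{I}(H^{q}_{I,J}(M))$ is routine from the composite-functor machinery; the delicate point is to confirm that its limit term is $H^{p+q}_{I,J}(M)$, which forces one to pin down precisely how $\Gamma_{I}$ interacts with $\Gamma_{I,J}$ (via ${\rm W}(I)\subseteq{\rm W}(I,J)$ and the ensuing relations among these torsion functors, together with the preservation of injectivity recalled above). I would single out verifying this identification of the abutment as the main obstacle; once it is settled, everything else is bookkeeping identical to the proof of Theorem~\ref{11}.
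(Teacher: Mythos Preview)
Your approach is exactly the one the paper takes: set $F=\Gamma_{I}$, $G=\Gamma_{I,J}$, invoke the Grothendieck spectral sequence from \cite[Theorem~11.38]{r}, and then repeat verbatim the filtration argument of Theorem~\ref{11}. The paper records this in one line (``$FG(M)=\Gamma_{I,J}(-)$; the rest of the proof is similar to that of \ref{11}''), so your write-up is precisely the intended expansion, including your correct observation that the only substantive point is the identification of the composite functor and hence of the abutment.
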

\begin{proof}
Let
$F(-)=\Gamma_{I}(-)$ and $G(-)=\Gamma_{I,J}(-)$. Then $FG(M)=
\Gamma_{I,J}(-)$. The rest of the proof  is similar to that of \ref{11}.
 \end{proof}

\begin{corollary}\label{01}
If $\Ext_R^i(R/\frak{a}, M)$ is Artinian for
 all $\frak{a}\in \tilde{{\rm W}}(I,J)$ and all $i<t$, then
$H^{i}_{I,J}(M)$ is Artinian for all $i<t$.
\end{corollary}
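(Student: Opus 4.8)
The plan is to deduce this from Corollary \ref{1180} together with Lemma \ref{1165}, proceeding by induction on $t$. For $t=0$ there is nothing to prove, so assume $t\geq 1$ and that the statement holds for $t-1$; in particular $H^{i}_{I,J}(M)$ is already known to be Artinian for all $i<t-1$, and we must show $H^{t-1}_{I,J}(M)$ is Artinian. First I would invoke Corollary \ref{1180}: since $\Ext_R^i(R/\frak{a},M)$ is Artinian for all $\frak{a}\in\tilde{\rm W}(I,J)$ and all $i<t$, that corollary gives that $\Ext_R^{j}(R/\frak{a},H^{i}_{I,J}(M))$ is Artinian for all $\frak{a}\in\tilde{\rm W}(I,J)$, all $i<t$, and all $j\in\Bbb N_0$. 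In particular, for each $i<t$ the module $H^{i}_{I,J}(M)$ is an $(I,J)$-torsion module all of whose $\Ext_R^{j}(R/\frak a,-)$ are Artinian.

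The next step is to feed this into Lemma \ref{1165} with $t$ replaced by $t-1$: that lemma says $H^{t-1}_{I,J}(M)\in S$ provided $H^{t-1-i}_{I}(H^{i}_{I,J}(M))\in S$ for all $i\leq t-1$. Here I would take $S$ to be the Melkersson subcategory of Artinian $R$-modules. So it suffices to show that $H^{t-1-i}_{I}(H^{i}_{I,J}(M))$ is Artinian for each $i\leq t-1$. Fix such an $i<t$ and set $N=H^{i}_{I,J}(M)$. We have just observed that $N$ is $(I,J)$-torsion and that $\Ext_R^{j}(R/\frak a,N)$ is Artinian for all $\frak a\in\tilde{\rm W}(I,J)$ and all $j$; in particular, choosing any $\frak a\in\tilde{\rm W}(I,J)$ with $\sqrt{\frak a}\supseteq I$ (for instance $\frak a=I$, which lies in $\tilde{\rm W}(I,J)$ since $I^1\subseteq J+I$), the module $\Ext_R^{j}(R/I,N)$ is Artinian for all $j$. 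Then a standard argument (using that $N$ is $I$-torsion after applying $\Gamma_I$, and the long exact sequences of local cohomology built from a system of parameters / the Koszul complex on generators of $I$, as in \cite[Theorem 5.1]{m1} and \cite[Theorem 2.9]{am}) shows that $H^{k}_{I}(N)$ is Artinian for every $k$; alternatively this is the classical fact that if $\Ext_R^{j}(R/I,N)$ is finitely cogenerated for all $j$ then so is $H^k_I(N)$ for all $k$. Applying this with $N=H^{i}_{I,J}(M)$ and $k=t-1-i$ gives exactly what Lemma \ref{1165} requires.

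Assembling the pieces: for each $i\leq t-1$ the module $H^{t-1-i}_{I}(H^{i}_{I,J}(M))$ is Artinian, hence lies in $S$, so Lemma \ref{1165} (applied with $t-1$ in place of $t$) yields $H^{t-1}_{I,J}(M)\in S$, i.e. $H^{t-1}_{I,J}(M)$ is Artinian. Combined with the inductive hypothesis this gives that $H^{i}_{I,J}(M)$ is Artinian for all $i<t$, completing the induction. The main obstacle I anticipate is the verification in the previous paragraph that Artinian-ness of all $\Ext_R^{j}(R/I,N)$ forces Artinian-ness of all $H^k_I(N)$ for an arbitrary (not necessarily finitely generated) $I$-torsion-type module $N$; this is where one must be careful to cite the right Melkersson-type criterion (\cite[Theorem 2.9]{am}) rather than a Noetherian-module argument, and it is the reason the hypotheses of \ref{1180} are phrased in terms of all $\frak a\in\tilde{\rm W}(I,J)$. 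Everything else is bookkeeping with the spectral-sequence filtrations already set up in \ref{1165}.
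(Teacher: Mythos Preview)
Your proposal is correct and follows essentially the same route as the paper: invoke Corollary \ref{1180} to make all $\Ext_R^{j}(R/\frak a,H^{i}_{I,J}(M))$ Artinian, use \cite[Theorem 2.9]{am} to pass to Artinianness of $H^{j}_{I}(H^{i}_{I,J}(M))$, and then apply Lemma \ref{1165}. The only cosmetic difference is that you wrap this in an induction on $t$, which is unnecessary---your verification of the hypotheses of Lemma \ref{1165} for $t-1$ works verbatim with any $s<t$ in place of $t-1$, so one can conclude directly for all $i<t$ without an inductive hypothesis (this is how the paper phrases it).
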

\begin{proof}
It follows by \ref{1180} and \cite[Theorem 2.9]{am} that
$H^{j}_{I}(H^{i}_{I,J}(M))$ is Artinian for all $i<t$ and
all $j\in \Bbb N_0$. Now the claim follows by \ref{1165}.
\end{proof}


\begin{proposition}\label{117}
Let $S$ be the class of all $R$-modules $N$ with $\dim_RN\leq k$,
where $k$ is an integer. Let $\Ext_R^{t+1-i}(R/{\frak a},
H^{i}_{I,J}(M))\in S$ for  all $\frak{a}\in \tilde{{\rm W}}(I,J)$ and all $i<t$,
and $\Ext_R^t(R/{\frak a}, M)\in S$ for all
$\frak{a}\in \tilde{{\rm W}}(I,J)$. Then $H^{t}_{I,J}(M)\in S$.
\end{proposition}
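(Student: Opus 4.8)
The plan is to derive the conclusion from Corollary \ref{116}, applied with $N=R/\frak a$ for each $\frak a\in\tilde{\rm W}(I,J)$, and then to assemble the pieces by means of the $(I,J)$-torsion description of $H^t_{I,J}(M)$.

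First I would record that for each $\frak a\in\tilde{\rm W}(I,J)$ the cyclic module $R/\frak a$ is $(I,J)$-torsion: if $\pp\in{\rm V}(\frak a)$ and $I^n\subseteq J+\frak a$ for some $n$, then $I^n\subseteq J+\pp$, so $\pp\in{\rm W}(I,J)$; hence $\Supp_R(R/\frak a)\subseteq{\rm W}(I,J)$. With this in hand the hypotheses of the proposition say precisely that $\Ext_R^{t+1-i}(R/\frak a,H^i_{I,J}(M))\in S$ for all $i<t$ and $\Ext_R^t(R/\frak a,M)\in S$, so Corollary \ref{116} applies and gives $\Hom_R(R/\frak a,H^t_{I,J}(M))\in S$; that is, $\dim_R(0:_{H^t_{I,J}(M)}\frak a)\le k$ for every $\frak a\in\tilde{\rm W}(I,J)$.

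Next I would use the identity $H^t_{I,J}(M)=\bigcup_{\frak a\in\tilde{\rm W}(I,J)}(0:_{H^t_{I,J}(M)}\frak a)$, which holds because $H^t_{I,J}(M)$ is $(I,J)$-torsion. Since the support of a sum of submodules is the union of their supports, this yields $\Supp_R H^t_{I,J}(M)=\bigcup_{\frak a\in\tilde{\rm W}(I,J)}\Supp_R(0:_{H^t_{I,J}(M)}\frak a)$. By the previous step, every prime $\pp$ belonging to one of the sets on the right satisfies $\dim R/\pp\le k$; hence the same holds for every $\pp\in\Supp_R H^t_{I,J}(M)$, and therefore $\dim_R H^t_{I,J}(M)\le k$, i.e. $H^t_{I,J}(M)\in S$.

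I expect the one genuinely delicate point to be the passage from the submodules $(0:_{H^t_{I,J}(M)}\frak a)$ to $H^t_{I,J}(M)$: a Serre subcategory need not be closed under arbitrary unions of submodules, so the argument really does use the specific nature of $S$, namely that membership in $S$ is detected on the support and that the support functor carries unions of submodules to unions of supports. Everything else is a routine combination of Corollary \ref{116} with the standard torsion description of $H^t_{I,J}(M)$.
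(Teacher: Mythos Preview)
Your proof is correct and follows essentially the same approach as the paper: apply Corollary~\ref{116} with $N=R/\frak a$ to obtain $\Hom_R(R/\frak a,H^t_{I,J}(M))\in S$ for every $\frak a\in\tilde{\rm W}(I,J)$, then use the torsion decomposition $H^t_{I,J}(M)=\bigcup_{\frak a}(0:_{H^t_{I,J}(M)}\frak a)$ to conclude. Your write-up is in fact more explicit than the paper's, since you spell out why $R/\frak a$ is $(I,J)$-torsion and why the union of submodules of dimension $\le k$ again has dimension $\le k$ via the support.
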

\begin{proof}
By \ref{116}, $\Hom_R(R/\frak a, H^{t}_{I,J}(M))\in S$ for all
$\frak{a}\in \tilde{{\rm W}}(I,J)$. Also we know
that
$$H^{t}_{I,J}(M)=\bigcup_{\frak a\in\tilde{{\rm W}}(I,J)}
(0:_{H^{t}_{I,J}(M)} \frak a)= \bigcup_{\frak a\in\tilde{{\rm
W}}(I,J)} \Hom_R(R/\frak a, H^{t}_{I,J}(M)).$$  It follows that
$H^{t}_{I,J}(M)\in S$.
\end{proof}

\begin{corollary}\label{118}
Let $S$ be the class of all $R$-modules $N$ with $\dim_RN\leq k$,
where $k$ is an integer. If  $\Ext_R^i(R/\frak{a}, M)\in S$ for
 all $\frak{a}\in \tilde{{\rm W}}(I,J)$ and all $i<t$, then
$H^{i}_{I,J}(M)\in S$ for all $i<t$.
\end{corollary}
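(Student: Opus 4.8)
The plan is to deduce Corollary~\ref{118} from Proposition~\ref{117} by induction on $t$, exactly in the spirit of the proofs of \ref{1180} and \ref{01}. The base case $t=0$ is vacuous. For the inductive step, suppose the claim holds for $t-1$, and assume $\Ext_R^i(R/\frak a, M)\in S$ for all $\frak a\in\tilde{\rm W}(I,J)$ and all $i<t$.

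First I would show that $\Gamma_{I,J}(M)$ and each $H^i_{I,J}(M)$ for $i<t$ has the property that $\Ext_R^{j}(R/\frak a, H^i_{I,J}(M))\in S$ for all $\frak a\in\tilde{\rm W}(I,J)$ and all $j\in\Bbb N_0$. The point is that Proposition~\ref{117}, applied with various values of $t$, upgrades control of $\Ext_R^{t+1-i}(R/\frak a, H^i_{I,J}(M))$ (for $i<t$) together with $\Ext_R^t(R/\frak a, M)$ into the conclusion $H^t_{I,J}(M)\in S$; but in fact the mechanism is stronger once one feeds it back through itself, just as \ref{1180} bootstraps \ref{1176}. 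Concretely, starting from $\Ext_R^i(R/\frak a, M)\in S$ for $i<t$, one first gets (by \ref{117} with small index, or directly) that $\Gamma_{I,J}(M)\in S$, and then that $\Ext_R^j(R/\frak a,\Gamma_{I,J}(M))\in S$ for all $j$, since $\Gamma_{I,J}(M)=\bigcup_{\frak a}\Hom_R(R/\frak a,\Gamma_{I,J}(M))$ lies in $S$ and $S$ (being the class of modules of dimension $\le k$) is closed under submodules and extensions, so all its subquotients and their $\Ext$ modules with $R/\frak a$ again have dimension $\le k$. Iterating this through $H^1_{I,J}(M),\dots,H^{t-1}_{I,J}(M)$ the same way gives $\Ext_R^j(R/\frak a, H^i_{I,J}(M))\in S$ for all $i<t$, all $\frak a\in\tilde{\rm W}(I,J)$ and all $j\in\Bbb N_0$.

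With that in hand, the hypotheses of Proposition~\ref{117} are met: $\Ext_R^{t+1-i}(R/\frak a, H^i_{I,J}(M))\in S$ for all $i<t$ (from the previous paragraph) and $\Ext_R^t(R/\frak a, M)\in S$ for all $\frak a\in\tilde{\rm W}(I,J)$ (a hypothesis of the corollary). Hence $H^t_{I,J}(M)\in S$. Combined with the fact that $H^i_{I,J}(M)\in S$ for $i<t$ — which is the inductive hypothesis, or follows from the same bootstrapping argument — this gives $H^i_{I,J}(M)\in S$ for all $i<t+1$, completing the induction. Alternatively, and more cleanly, one may phrase the whole argument as a single induction on $t$ using \ref{117} directly at each stage, mirroring line-for-line the proof of \ref{01} but with ``Artinian'' replaced by ``$\in S$'' and \ref{1176} replaced by \ref{117}.

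The main obstacle is making sure the self-feeding step is legitimate for this particular $S$: one needs that if a module $L$ satisfies $\Ext_R^0(R/\frak a, L)\in S$ for all $\frak a\in\tilde{\rm W}(I,J)$ and $L=\bigcup_{\frak a}\Hom_R(R/\frak a, L)$, then in fact $\Ext_R^j(R/\frak a, L)\in S$ for all $j$. For $S=\{N:\dim_R N\le k\}$ this is immediate because $\dim_R\Ext_R^j(R/\frak a,L)\le\dim_R L\le k$ once $\dim_R L\le k$, and $\dim_R L\le k$ follows from $L\in S$, which in turn is exactly what Proposition~\ref{117} delivers. So the only care needed is to invoke \ref{117} in the right order — first for $\Gamma_{I,J}(M)$, then successively for the higher $H^i_{I,J}(M)$ with $i<t$ — and to check at each stage that the $\Ext_R^{t+1-i}$ hypothesis for the current stage has already been established at the previous stage. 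This is routine, but it is where one must be careful not to create a circular dependence on the very $\Ext$ modules one is trying to bound.
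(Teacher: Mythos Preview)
Your approach is essentially the paper's: iterate Proposition~\ref{117}, using at each step that once $H^i_{I,J}(M)\in S$ (i.e.\ $\dim_R H^i_{I,J}(M)\le k$) every $\Ext_R^j(R/\frak a, H^i_{I,J}(M))$ automatically lies in $S$ because its support sits inside $\Supp_R H^i_{I,J}(M)$. The paper's proof is just the stripped-down version of this: show $\Gamma_{I,J}(M)=\bigcup_{\frak a\in\tilde{\rm W}(I,J)}\Hom_R(R/\frak a,M)\in S$ directly from the hypothesis on $\Ext_R^0$, then apply \ref{117} to get $H^1_{I,J}(M)\in S$, and continue.

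There is, however, an index slip in your inductive step. You invoke Proposition~\ref{117} with parameter $t$ and write ``$\Ext_R^t(R/\frak a,M)\in S$ (a hypothesis of the corollary)'' to conclude $H^t_{I,J}(M)\in S$; but the corollary only assumes $\Ext_R^i(R/\frak a,M)\in S$ for $i<t$, not for $i=t$, and only asks you to prove $H^i_{I,J}(M)\in S$ for $i<t$. The correct move is to apply \ref{117} with parameter $t-1$: the inductive hypothesis gives $H^i_{I,J}(M)\in S$ for $i<t-1$, hence $\Ext_R^{(t-1)+1-i}(R/\frak a,H^i_{I,J}(M))\in S$ for those $i$, and $\Ext_R^{t-1}(R/\frak a,M)\in S$ \emph{is} a hypothesis, yielding $H^{t-1}_{I,J}(M)\in S$. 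With that shift your argument goes through and matches the paper's.
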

\begin{proof}
We know that
$$\Gamma_{I,J}(M)=\bigcup_{\frak a\in\tilde{{\rm
W}}(I,J)} \Hom_R(R/\frak a, \Gamma_{I,J}(M))= \bigcup_{\frak a\in\tilde{{\rm
W}}(I,J)} \Hom_R(R/\frak a, M).$$
Therefore $\Gamma_{I,J}(M)\in S$.
It follows by \ref{117} that
$H^{1}_{I,J}(M)\in S$.
By
keeping this process, the claim follows.
\end{proof}

\begin{corollary}\label{119}
Let $S$ be the class of  Artinian $R$-modules, or  the class of all $R$-modules $N$ with $\dim_RN\leq k$,
where $k$ is an integer. If $H^{i}_{\frak{a}}(M)\in S$
for  all $\frak{a}\in \tilde{{\rm W}}(I,J)$ and all $i<t$, then
$H^{i}_{I,J}(M)\in S$ for  all $i<t$.
\end{corollary}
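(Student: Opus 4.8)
The plan is to deduce \ref{119} from the $\Ext$-versions already in hand, namely \ref{01} for the class of Artinian modules and \ref{118} for the class of modules of dimension $\leq k$ (this is also why the statement is restricted to exactly these two choices of $S$ rather than to a general Melkersson subcategory). Both of these take as hypothesis that $\Ext_R^i(R/\frak a,M)\in S$ for all $\frak a\in\tilde{\rm W}(I,J)$ and all $i<t$, whereas our hypothesis is phrased in terms of the modules $H^i_{\frak a}(M)$. So the only real step is to pass from $H^i_{\frak a}(M)\in S$ to $\Ext_R^i(R/\frak a,M)\in S$.

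Fix $\frak a\in\tilde{\rm W}(I,J)$. Since $R$ is Noetherian, $R/\frak a$ admits a free resolution by finitely generated free $R$-modules, and since every Serre subcategory is closed under finite direct sums and under subquotients, $\Ext_R^j(R/\frak a,N)\in S$ for every $N\in S$ and every $j\in\Bbb N_0$ (each such $\Ext$ module is a subquotient of a finite direct sum of copies of $N$). Applying this to the hypothesis $H^i_{\frak a}(M)\in S$ for $i<t$ gives $\Ext_R^j(R/\frak a,H^i_{\frak a}(M))\in S$ for all $i<t$ and all $j$, in particular for all $j<t-i$. Now invoke \ref{12} with the pair $(\frak a,0)$ in place of $(I,J)$, so that $H^i_{\frak a,0}(-)=H^i_{\frak a}(-)$ for all $i$, and with $N=R/\frak a$, which is $(\frak a,0)$-torsion because it is annihilated by $\frak a$: we obtain $\Ext_R^i(R/\frak a,M)\in S$ for all $i<t$.

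Since $\frak a\in\tilde{\rm W}(I,J)$ was arbitrary, the hypothesis of \ref{01} is met when $S$ is the class of Artinian $R$-modules, and the hypothesis of \ref{118} is met when $S$ is the class of $R$-modules $N$ with $\dim_R N\leq k$; in either case we conclude $H^i_{I,J}(M)\in S$ for all $i<t$, as desired. I do not expect a genuine obstacle: the one point worth attention is the assertion that $\Ext_R^j(R/\frak a,-)$ preserves $S$, which is exactly where Noetherianness of $R$ is used, together with the mild renaming of ideals needed to read \ref{12} for ordinary local cohomology. If one prefers a self-contained route, one may instead run the Grothendieck spectral sequence $\Ext_R^p(R/\frak a,H^q_{\frak a}(M))\Rightarrow\Ext_R^{p+q}(R/\frak a,M)$ directly, exactly as in the proof of \ref{11}, to reach the same conclusion $\Ext_R^i(R/\frak a,M)\in S$ for $i<t$.
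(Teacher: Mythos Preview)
Your proof is correct and follows the paper's strategy: reduce to \ref{01} (Artinian case) and \ref{118} (bounded-dimension case) by converting the hypothesis $H^i_{\frak a}(M)\in S$ into $\Ext_R^i(R/\frak a,M)\in S$ for each $\frak a\in\tilde{\rm W}(I,J)$. The only difference is packaging: the paper invokes \cite[Theorem 2.9]{am} for this conversion, whereas you rebuild that implication inside the paper by feeding $\Ext_R^j(R/\frak a,H^i_{\frak a}(M))\in S$ into \ref{12} with the pair $(\frak a,0)$; since \ref{12} is itself the spectral-sequence argument underlying the relevant direction of \cite[Theorem 2.9]{am}, the two routes are substantively the same.
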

\begin{proof}
The claim follows by \ref{01}, \ref{118} and \cite[Theorem 2.9]{am}.
\end{proof}


\begin{corollary}\label{120}
Let $S$ be the class of  Artinian $R$-modules, or  the class of all $R$-modules $N$ with $\dim_RN\leq k$,
where $k$ is an integer. Then the following statements are
equivalent:
\begin{itemize}
\item[(i)] $H^{i}_{I,J}(M)\in S$ for all $i<t$;
\item[(ii)] $H^{i}_{\frak{a}}(M)\in S$ for
all $\frak{a}\in \tilde{{\rm W}}(I,J)$ and all $i<t$.
\end{itemize}
\end{corollary}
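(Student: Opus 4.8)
The plan is to derive the two implications separately from results already established in this section, so that the corollary is essentially a repackaging. The implication (ii)$\Rightarrow$(i) requires no new argument whatsoever: it is exactly Corollary \ref{119}. Hence the real work lies in (i)$\Rightarrow$(ii), and there the plan is to reduce to Corollary \ref{16}. Note first that, as recorded in Section 2, both the class of Artinian $R$-modules and the class of $R$-modules of dimension $\leq k$ are Melkersson subcategories with respect to every ideal of $R$, so \ref{16} is applicable to the present $S$.

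For (i)$\Rightarrow$(ii) I would fix an arbitrary $\frak a\in\tilde{{\rm W}}(I,J)$ and set $N=R/\frak a$, which is a finitely generated $R$-module with $\Supp_R N={\rm V}(\frak a)$, so that $\frak a$ is an admissible ideal in \ref{16}. To invoke \ref{16} I must check that $\Ext_R^{j}(R/\frak a, H^{i}_{I,J}(M))\in S$ for all $i<t$ and all $j<t-i$; I would in fact verify this for every $j\in\Bbb N_0$. Since $R$ is Noetherian, $R/\frak a$ has a free resolution $F_\bullet$ with each $F_j$ finitely generated free, say of rank $n_j$, so $\Ext_R^{j}(R/\frak a,X)$ is a subquotient of $\Hom_R(F_j,X)\cong X^{n_j}$ and therefore lies in any Serre subcategory that contains $X$. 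Taking $X=H^{i}_{I,J}(M)$, which is in $S$ for $i<t$ by (i), gives the required $\Ext$-membership, and \ref{16} then yields $H^{i}_{\frak a}(M)\in S$ for all $i<t$. As $\frak a\in\tilde{{\rm W}}(I,J)$ was arbitrary, this is precisely (ii).

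I do not anticipate any genuine obstacle here. All the substance of the corollary is already carried by Corollaries \ref{119} and \ref{16}; the only additional ingredient is the routine observation that every Serre subcategory is closed under the functor $\Ext_R^{j}(R/\frak a,-)$, which follows from the finite-free-resolution and subquotient estimate above. The one small point to keep in mind is that the $\Ext$-vanishing hypothesis of \ref{16} must be available over its full index range, which is why I would establish the $\Ext$-membership for all $j$ at once rather than only for $j<t-i$.
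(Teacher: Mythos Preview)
Your proposal is correct and follows exactly the route the paper takes: the paper's proof is the single line ``The claim follows by \ref{16} and \ref{119}.'' Your only addition is the routine verification (via a finitely generated free resolution of $R/\frak a$) that a Serre subcategory containing $H^{i}_{I,J}(M)$ automatically contains every $\Ext_R^{j}(R/\frak a,H^{i}_{I,J}(M))$, which the paper leaves implicit.
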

\begin{proof}
The claim follows by \ref{16} and \ref{119}.
\end{proof}

\begin{corollary}\label{21}
Let $S$ be the class of  Artinian $R$-modules, or  the class of all $R$-modules $N$ with $\dim_RN\leq k$,
where $k$ is an integer. Then
$$\inf\{i: H^{i}_{I,J}(M)\notin
S\}=\inf\{\inf\{i: H^{i}_{\frak a}(M)\notin S\}: \frak{a}\in
\tilde{{\rm W}}(I,J)\}.$$
\end{corollary}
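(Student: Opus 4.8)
The statement is the ``infimum form'' of the equivalence recorded in Corollary \ref{120}, so the plan is to assemble it from the two inequalities already in hand. Set
$$n=\inf\{i: H^{i}_{I,J}(M)\notin S\},\qquad m=\inf\{\inf\{i: H^{i}_{\frak a}(M)\notin S\}: \frak a\in\tilde{{\rm W}}(I,J)\}.$$
Both candidate classes $S$ — the Artinian $R$-modules and the $R$-modules of dimension at most $k$ — are Melkersson subcategories with respect to every ideal of $R$, in particular with respect to $I$, as noted after the definition of Melkersson subcategory. Hence Corollary \ref{17} applies verbatim and gives $n\leq m$.

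For the reverse inequality I would pass to truncated vanishing. If $m$ is finite, then by definition of $m$ we have $H^{i}_{\frak a}(M)\in S$ for every $\frak a\in\tilde{{\rm W}}(I,J)$ and every $i<m$; Corollary \ref{119} then yields $H^{i}_{I,J}(M)\in S$ for all $i<m$, i.e. $n\geq m$, so $n=m$. If $m=\infty$, applying Corollary \ref{119} at each finite truncation level $t$ shows $H^{i}_{I,J}(M)\in S$ for all $i$, so $n=\infty=m$. Alternatively one can quote Corollary \ref{120} directly: for each integer $t$, the assertion ``$n\geq t$'' is exactly condition (i) of \ref{120} and ``$m\geq t$'' is condition (ii), and \ref{120} says the two are equivalent; letting $t$ vary gives $n=m$.

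There is essentially no obstacle beyond this bookkeeping: the genuine content lives in Corollaries \ref{16}/\ref{17} and \ref{119}, which themselves rest on the Grothendieck spectral sequence argument of Theorem \ref{11} together with the vanishing statements \ref{01} and \ref{117}. The only subtlety is the case $m=\infty$, which is handled by invoking \ref{119} at every finite truncation level rather than ``all at once''.
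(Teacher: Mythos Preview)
Your proposal is correct and is exactly the approach the paper intends: Corollary~\ref{21} is stated in the paper without proof as an immediate consequence of Corollary~\ref{120}, and your argument simply unpacks that equivalence level by level (or, equivalently, combines the two halves \ref{17} and \ref{119} that make up \ref{120}). There is nothing to add.
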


The following result is a generalization of \cite[Theorem 2.13]{pl}.

\begin{theorem}\label{23}
Let $S$ be the class of  Artinian $R$-modules, or  the class of all $R$-modules $N$ with $\dim_RN\leq k$,
where $k$ is an integer.
Let $M$ be a finitely generated $R$-module, or be a  $ZD$-module
 such that $M/\frak{a}M\notin S$
for all $\frak{a}\in \tilde{\rm W}(I,J)$.
 Then
$$\inf\{i: H^{i}_{I,J}(M)\notin S\}=\inf\{S-\depth_\frak{a}(M):
\frak{a}\in \tilde{\rm W}(I,J)\}.$$
\end{theorem}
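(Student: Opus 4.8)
The plan is to prove the two inequalities separately. One direction is essentially already available: by \ref{22} (when $M$ is a $ZD$-module with $M/\frak aM\notin S$ for all relevant $\frak a$) we already have
$$\inf\{i: H^{i}_{I,J}(M)\notin S\}\leq\inf\{S-\depth_\frak{a}(M): \frak{a}\in \tilde{\rm W}(I,J)\}.$$
In the finitely generated case the same inequality follows from \ref{17} together with the identity $S-\depth_\frak a(M)=\inf\{i:H^i_\frak a(M)\notin S\}$ (this is \cite[Corollary 2.12]{pl}, valid for finitely generated $M$ without any non-vanishing hypothesis, with the convention that the $\inf$ of the empty set is $\infty$). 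So the real content is the reverse inequality
$$\inf\{S-\depth_\frak{a}(M): \frak{a}\in \tilde{\rm W}(I,J)\}\leq\inf\{i: H^{i}_{I,J}(M)\notin S\}.$$

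First I would set $t=\inf\{i: H^{i}_{I,J}(M)\notin S\}$; we may assume $t<\infty$, since otherwise there is nothing to prove. Then $H^{i}_{I,J}(M)\in S$ for all $i<t$, so by \ref{120} (equivalence of (i) and (ii), which is exactly where the hypothesis that $S$ is the Artinian class or a $\dim\leq k$ class is used) we obtain $H^{i}_{\frak a}(M)\in S$ for all $\frak a\in\tilde{\rm W}(I,J)$ and all $i<t$. By the characterization $S-\depth_\frak a(M)=\inf\{i:H^i_\frak a(M)\notin S\}$, this says $S-\depth_\frak a(M)\geq t$ for every $\frak a\in\tilde{\rm W}(I,J)$, hence $\inf\{S-\depth_\frak a(M):\frak a\in\tilde{\rm W}(I,J)\}\geq t$, which is the desired inequality.

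There is one subtlety to address: the definition of $S-\depth_\frak a(M)$ and the characterization from \cite[Corollary 2.12]{pl} presuppose that $S$ is a Melkersson subcategory with respect to $\frak a$ and that $M$ is a $ZD$-module with $M/\frak aM\notin S$ (or $M$ finitely generated). The classes in question — Artinian modules and modules of dimension $\leq k$ — are Melkersson subcategories with respect to every ideal, as recalled after the first definition of Section 2, so the Melkersson hypothesis is automatic. For the $ZD$ case the non-vanishing hypothesis $M/\frak aM\notin S$ is part of the statement, and for the finitely generated case one uses the convention $S-\depth_\frak a(M)=\infty$ when $M/\frak aM\in S$, consistent with $\inf\emptyset=\infty$; in either situation the chain of equalities and implications above goes through verbatim. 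I would state these matching conventions explicitly at the start of the proof to keep everything consistent.

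I do not expect a genuine obstacle here: the theorem is a clean synthesis of \ref{22} (or \ref{17}) for the easy inequality and \ref{120} for the hard one, both of which are already established in the excerpt. The only place requiring care is bookkeeping around the $\inf$-convention and the fact that \ref{120} is precisely the special-subcategory input that makes the argument work — this is why the statement restricts $S$ to the Artinian class or a $\dim\leq k$ class rather than an arbitrary Melkersson subcategory. So the write-up is short: reduce to $t<\infty$, invoke \ref{120} to pass from $H^i_{I,J}(M)\in S$ to $H^i_\frak a(M)\in S$, translate via \cite[Corollary 2.12]{pl}, and combine with \ref{22}/\ref{17}.
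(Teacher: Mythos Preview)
Your proposal is correct and is essentially the paper's argument unpacked: the paper simply cites \ref{21} (which packages both inequalities you prove separately via \ref{22}/\ref{17} and \ref{120}) together with \cite[Theorem 2.13]{pl} for the identification $S\text{-}\depth_{\frak a}(M)=\inf\{i:H^i_{\frak a}(M)\notin S\}$. The only cosmetic difference is that you invoke \cite[Corollary 2.12]{pl} rather than \cite[Theorem 2.13]{pl}; the latter is the version covering both the finitely generated and the $ZD$ cases simultaneously, which is why the paper can dispense with the case distinction you make.
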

\begin{proof}
The claim follows by \ref{21} and \cite[Theorem 2.13]{pl}.
\end{proof}

\section{Some identities between local cohomology modules }

Suppose that
$$E^\bullet(M): 0\longrightarrow
E^0_R(M)\stackrel{d^0}\longrightarrow E^1_R(M)\longrightarrow
\cdots \longrightarrow E^i_R(M)\stackrel{d^i}\longrightarrow
E^{i+1}_R(M)\longrightarrow\cdots~(*)$$
 is a minimal injective resolution
of $M$, where
$E^i_R(M)\cong\bigoplus_{\frak p\in {\rm Spec(R)}}\mu^i({\frak
p},M)E_R(R/\frak p)$ is a decomposition of $E^i_R(M)$ as the
direct sum of indecomposable injective $R$-modules,
$E_R(R/\frak p)$ denotes the injective hull of $R/\frak p$ and
$\mu^i({\frak p},M)$ denotes the $i$-th Bass number of $M$ with
respect to $\frak p$. It follows by \cite[Proposition 1.11]{ty} that
$$\Gamma_{I,J}(E^i_R(M))\cong\bigoplus_{\frak p\in {\rm
W}(I,J)}\mu^i({\frak p},M)E_R(R/\frak p).$$
Hence
$\Supp_R \Gamma_{I,J}(E^i_R(M))=\{\frak p\in  {\rm W}(I,J):
\mu^i({\frak p},M)\not=0\}.$

The above mentioned results are assumed known through this section.

\begin{theorem}\label{29}
Let $S$ be a Serre subcategory  closed under taking injective
hulls. The following conditions are equivalent:
\begin{itemize} \item[(i)] $ H^{i}_{I,J}(M)\in
S$ for all $i<t$. \item[(ii)]$ \Gamma_{I,J}(E^i_R(M))\in S$ for
all $i<t$.
\end{itemize}
\end{theorem}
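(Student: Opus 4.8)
The plan is to run a standard dévissage along the minimal injective resolution $(*)$, exploiting that $\Gamma_{I,J}(-)$ is left exact and that applying it to $(*)$ computes the modules $H^i_{I,J}(M)$. First I would set $F^i := \Gamma_{I,J}(E^i_R(M))$ and note, using \cite[Proposition 1.11]{ty} as quoted just before the statement, that each $F^i$ is a direct sum of modules $E_R(R/\frak p)$ with $\frak p \in {\rm W}(I,J)$; the complex $F^\bullet$ has cohomology $H^i(F^\bullet) = H^i_{I,J}(M)$. Break $(*)$ into short exact sequences $0 \to M \to E^0 \to C^0 \to 0$ and $0 \to C^{i-1} \to E^i \to C^i \to 0$, and apply $\Gamma_{I,J}(-)$, producing the long exact cohomology sequences that tie $H^i_{I,J}(M)$, the $F^j$, and the "syzygy" contributions together.

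For the direction (ii)$\Rightarrow$(i): assuming $F^i \in S$ for all $i<t$, the complex $0 \to F^0 \to F^1 \to \cdots \to F^{t-1}$ consists of objects of $S$, so all its subquotients — in particular all its cohomologies $H^i_{I,J}(M)$ for $i<t$ — lie in $S$, since $S$ is closed under submodules and quotients; closure under injective hulls is not even needed here. For the direction (i)$\Rightarrow$(ii): proceed by induction on $t$. For $t=1$ we have $\Gamma_{I,J}(M) = H^0_{I,J}(M) \in S$; since $\Gamma_{I,J}(M) \hookrightarrow E^0_R(M)$ is an essential extension (because $(*)$ is a \emph{minimal} injective resolution, $E^0_R(M)$ is the injective hull of $M$, hence also an injective hull of the submodule $\Gamma_{I,J}(M)$ after applying $\Gamma_{I,J}$), we get $F^0 = \Gamma_{I,J}(E^0_R(M)) = E_R(\Gamma_{I,J}(M)) \in S$ by the hypothesis that $S$ is closed under injective hulls. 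For the inductive step, suppose the equivalence holds for $t-1$; given (i) for $t$, we already know $F^i \in S$ for $i<t-1$, and it remains to show $F^{t-1} \in S$. Here I would use that $\Gamma_{I,J}(E^{t-1}_R(M))$ is the injective hull of $\Gamma_{I,J}(Z^{t-1})$, where $Z^{t-1} = \Ker d^{t-1}$, together with the fact that $\Gamma_{I,J}$ preserves minimal injective resolutions of $I,J$-torsion-relevant pieces; concretely, the tail of $(*)$ starting at $E^{t-1}$ is a minimal injective resolution of $Z^{t-1}$, and applying $\Gamma_{I,J}$ one shows $H^{t-1}_{I,J}(M)$ sits essentially inside $F^{t-1}/(\text{image of } F^{t-2})$, while the earlier $F^j$ being in $S$ lets one conclude $F^{t-1} \in S$ by a diagram chase through the short exact sequences, again invoking closure of $S$ under injective hulls at the key spot.

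The main obstacle I anticipate is the bookkeeping in the inductive step: one must be careful that applying $\Gamma_{I,J}(-)$ to the truncated minimal injective resolution of the syzygy module $Z^{t-1}$ still yields something whose zeroth term is an injective hull of a module in $S$, i.e. that minimality is not destroyed by $\Gamma_{I,J}$ in the relevant degrees. This is exactly where the identity $\Gamma_{I,J}(E_R(R/\frak p)) = E_R(R/\frak p)$ or $0$ according to whether $\frak p \in {\rm W}(I,J)$ is used, so the cleaner route may be to argue directly with Bass numbers: $\mu^i(\frak p, M) \neq 0$ with $\frak p \in {\rm W}(I,J)$ forces $E_R(R/\frak p)$ to appear as a summand of $F^i$, and one checks that $\Supp_R F^i$ is controlled by those $\frak p \in {\rm W}(I,J)$ that are associated primes of $H^j_{I,J}(M)$ for $j \le i$ — all of which lie in $S$ by hypothesis — whence $F^i \in S$ by closure under injective hulls and under (possibly infinite, but locally finite) direct sums implicit in the Serre/Melkersson setup. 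I would present the Bass-number version as the main line and relegate the dévissage to a remark.
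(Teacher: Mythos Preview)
Your d\'evissage route is essentially the paper's proof. The paper isolates exactly the observation you stumble onto in the base case and in your ``main obstacle'' paragraph: since $\Gamma_{I,J}(E^i_R(M))$ is injective and $\Ker\Gamma_{I,J}(d^i)=\Ker d^i\cap\Gamma_{I,J}(E^i_R(M))$ is essential in it (because $(*)$ is minimal), $\Gamma_{I,J}(E^i_R(M))$ is the injective hull of $\Ker\Gamma_{I,J}(d^i)$. With this in hand the paper simply invokes \cite[Lemma~5.4]{m}, whose proof for Artinian modules goes through verbatim for any Serre subcategory closed under injective hulls. Your induction is just a reproof of that lemma: $\Ker\Gamma_{I,J}(d^{t-1})$ is an extension of $\Im\Gamma_{I,J}(d^{t-2})\subseteq F^{t-2}\in S$ by $H^{t-1}_{I,J}(M)\in S$, so it lies in $S$, and then $F^{t-1}$ is its injective hull. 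State that one essentiality fact cleanly at the outset and the inductive step becomes a two-line extension argument; there is no need to pass to syzygies $Z^{t-1}$ of $M$ itself.

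However, do \emph{not} make the Bass-number version your main line. The step ``$F^i$ is a direct sum of copies of $E_R(R/\frak p)$ each lying in $S$, hence $F^i\in S$'' is a genuine gap: a Serre subcategory closed under injective hulls need not be closed under infinite direct sums, and nothing in the hypotheses gives you local finiteness of the decomposition. (Your parenthetical ``implicit in the Serre/Melkersson setup'' is not justified.) Moreover, the claim that the primes $\frak p\in{\rm W}(I,J)$ with $\mu^i(\frak p,M)\neq 0$ are controlled by $\bigcup_{j\le i}\Ass_R H^j_{I,J}(M)$ is not established and is not obviously true in this generality. Keep the d\'evissage as the proof and drop the Bass-number argument, or at most mention it as a heuristic.
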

\begin{proof}
Since
$\Gamma_{I,J}(E^i_R(M))$ is injective and $\Ker
\Gamma_{I,J}(d^i)=\Ker d^i \cap \Gamma_{I,J}(E^i_R(M))$, thus
$\Gamma_{I,J}(E^i_R(M))$ is injective hull of $\Ker
\Gamma_{I,J}(d^i)$. Now the claim follows by \cite[Lemma 5.4]{m}.
We note that the proof of \cite[Lemma 5.4]{m} is still valid if the class of Artinian $R$-modules
is replaced by a Serre subcategory that is closed under taking injective
hulls.
\end{proof}

\begin{corollary}\label{31}
The following statements are equivalent:
\begin{itemize}
\item[(i)] $\Supp_R H^{i}_{I,J}(M)$ is a finite subset of $\Max(R)$ for all
$i<t$;
\item[(ii)] $H^{i}_{I,J}(M)$ is Artinian for all $i<t$.
\end{itemize}
\end{corollary}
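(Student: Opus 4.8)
The plan is to prove Corollary \ref{31} by reducing the equivalence to the local-cohomology version of Melkersson's criterion for Artinianness, using Theorem \ref{29} applied to the Serre subcategory of modules with finite support contained in $\Max(R)$.

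First I would recall that the class $S$ of $R$-modules with support a finite subset of $\Max(R)$ is a Serre subcategory closed under taking injective hulls: a module $N$ lies in $S$ precisely when it is supported at finitely many maximal ideals, and $E_R(R/\frak m)$ has support $\{\frak m\}\subseteq\Max(R)$, so injective hulls of modules in $S$ stay in $S$. Hence Theorem \ref{29} applies with this $S$, giving that $H^i_{I,J}(M)\in S$ for all $i<t$ if and only if $\Gamma_{I,J}(E^i_R(M))\in S$ for all $i<t$. The implication (ii) $\Rightarrow$ (i) is immediate, since an Artinian module has support a finite subset of $\Max(R)$.

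For (i) $\Rightarrow$ (ii), the key point is to upgrade ``finite support in $\Max(R)$'' to ``Artinian'' for the modules $H^i_{I,J}(M)$. Here I would invoke the fact, recorded in the preamble to Section 3, that $\Gamma_{I,J}(E^i_R(M))\cong\bigoplus_{\frak p\in \mathrm{W}(I,J)}\mu^i(\frak p,M)E_R(R/\frak p)$; if this direct sum has support a finite subset of $\Max(R)$, then only finitely many maximal ideals $\frak p$ occur with $\mu^i(\frak p,M)\neq 0$ and each appears with finite multiplicity, so $\Gamma_{I,J}(E^i_R(M))$ is a finite direct sum of modules of the form $E_R(R/\frak m)$, which is Artinian. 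Running the standard argument of \cite[Lemma 5.4]{m} (valid for the Serre subcategory of Artinian modules, which is also closed under injective hulls) then yields that $H^i_{I,J}(M)$ is Artinian for all $i<t$; alternatively one applies Theorem \ref{29} a second time with $S$ the class of Artinian modules, since we have just shown $\Gamma_{I,J}(E^i_R(M))$ is Artinian for $i<t$.

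The main obstacle is the passage from the finiteness of $\Supp_R\Gamma_{I,J}(E^i_R(M))$ to its Artinianness; this relies essentially on the explicit description of $\Gamma_{I,J}(E^i_R(M))$ as a direct sum of indecomposable injectives indexed by $\mathrm{W}(I,J)$, together with the observation that finitely supported-at-maximals forces finitely many indecomposable summands each with finite Bass number. Once that is in hand, both directions close by applying Theorem \ref{29} (for the Artinian Serre subcategory in one direction, the finite-support-in-$\Max(R)$ subcategory in the other) and the proof is complete.
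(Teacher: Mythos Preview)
Your approach is the natural one and lines up with the paper, which gives no explicit proof and treats the result as an immediate corollary of Theorem~\ref{29}. The implication (ii)$\Rightarrow$(i) is indeed trivial, and your plan to pass through the explicit decomposition $\Gamma_{I,J}(E^i_R(M))\cong\bigoplus_{\frak p\in{\rm W}(I,J)}\mu^i(\frak p,M)E_R(R/\frak p)$ and then re-apply Theorem~\ref{29} with the Artinian Serre subcategory is exactly what the placement of the corollary suggests.

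There is, however, a genuine gap. You assert that if this direct sum has support a finite subset of $\Max(R)$ then ``each appears with finite multiplicity,'' but the support condition only forces the set of contributing primes to be a finite set of maximal ideals; it says nothing about the cardinals $\mu^i(\frak p,M)$. For a general module $M$ these Bass numbers need not be finite: over a local ring $(R,\frak m)$ with $I=\frak m$, $J=0$ and $M=E_R(R/\frak m)^{(\aleph_0)}$, one has $H^0_{I,J}(M)=M$ with $\Supp_R M=\{\frak m\}$, yet $M$ is not Artinian, so (i) holds for $t=1$ while (ii) fails. Thus the step from ``finite support in $\Max(R)$'' to ``Artinian'' for $\Gamma_{I,J}(E^i_R(M))$ does not go through without an extra hypothesis (for instance $M$ finitely generated, which makes every $\mu^i(\frak p,M)$ finite). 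With such a hypothesis your argument is complete and is presumably what the paper has in mind; without it the equivalence as stated need not hold.
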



\begin{corollary}\label{319}
If $(R, \frak m)$ is a local ring, then
$$\inf\{i:
\Supp_R H^{i}_{I,J}(M)\not\subseteq\{\frak m\}\}=
\inf\{i: H^{i}_{I,J}(M) \text{is not Artinian} \}.$$
\end{corollary}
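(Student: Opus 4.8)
The plan is to derive \ref{319} as a direct consequence of \ref{31} together with \ref{29}. Since $(R,\frak m)$ is local, an Artinian module has support contained in $\{\frak m\}$, and conversely any finitely supported module whose support lies in $\{\frak m\}$ is Artinian precisely when the relevant Bass/socle data is finite; but we do not need to re-prove that here because \ref{31} already gives the equivalence "$\Supp_R H^{i}_{I,J}(M)$ is a finite subset of $\Max(R)$ for all $i<t$" $\iff$ "$H^{i}_{I,J}(M)$ is Artinian for all $i<t$." In the local case the only member of $\Max(R)$ is $\frak m$, so the condition "$\Supp_R H^{i}_{I,J}(M)$ is a finite subset of $\Max(R)$" becomes "$\Supp_R H^{i}_{I,J}(M)\subseteq\{\frak m\}$."

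First I would fix the integer $t$ and consider the two infima. Write $a=\inf\{i:\Supp_R H^{i}_{I,J}(M)\not\subseteq\{\frak m\}\}$ and $b=\inf\{i:H^{i}_{I,J}(M)\text{ is not Artinian}\}$. To show $a\le b$: if $i<b$ then $H^{i}_{I,J}(M)$ is Artinian, hence (over the local ring $(R,\frak m)$) $\Supp_R H^{i}_{I,J}(M)\subseteq\{\frak m\}$, so $i<a$; this gives $b\le a$. Wait — I should be careful about the direction. The inequality "$H^i$ Artinian for all $i<b$" yields "$\Supp_R H^i\subseteq\{\frak m\}$ for all $i<b$", which says $a\ge b$. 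For the reverse, $a\le b$: I apply \ref{31} with the integer $t$ replaced by $a$. For all $i<a$ we have $\Supp_R H^{i}_{I,J}(M)\subseteq\{\frak m\}$, so in particular $\Supp_R H^{i}_{I,J}(M)$ is a finite subset of $\Max(R)$ (namely a subset of $\{\frak m\}$); by \ref{31} this forces $H^{i}_{I,J}(M)$ to be Artinian for all $i<a$, whence $b\ge a$. Combining the two inequalities gives $a=b$.

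The one subtlety to address cleanly is that \ref{31} is stated for a generic threshold "$t$"; since the claim "$\Supp_R H^{i}_{I,J}(M)\subseteq\{\frak m\}$ for all $i<n$" trivially implies "$\Supp_R H^{i}_{I,J}(M)$ is a finite subset of $\Max(R)$ for all $i<n$," I can invoke \ref{31} with $t:=a$ to conclude Artinianness of all $H^{i}_{I,J}(M)$ with $i<a$. There is also the boundary case $a=\infty$ (all $H^{i}_{I,J}(M)$ finitely/minimally supported at $\frak m$) or $b=\infty$, where the argument above still works with the usual convention that the statement "$i<\infty$" ranges over all nonnegative integers; and $a=0$ forces $b=0$ since then $\Supp_R\Gamma_{I,J}(M)\not\subseteq\{\frak m\}$, so $\Gamma_{I,J}(M)$ is not Artinian. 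I do not anticipate a real obstacle: the entire content has been packaged into \ref{29} and \ref{31}, and \ref{319} is essentially the observation that over a local ring finite support in $\Max(R)$ means support in $\{\frak m\}$. The only thing requiring a line of care is matching the two infima via the two one-sided inequalities rather than a single biconditional.

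\begin{proof}
Put $a=\inf\{i:\Supp_R H^{i}_{I,J}(M)\not\subseteq\{\frak m\}\}$ and $b=\inf\{i:H^{i}_{I,J}(M)\text{ is not Artinian}\}$. Since $(R,\frak m)$ is local, any Artinian $R$-module has support contained in $\{\frak m\}$; hence for every $i<b$ we have $\Supp_R H^{i}_{I,J}(M)\subseteq\{\frak m\}$, which shows $a\ge b$. Conversely, for every $i<a$ we have $\Supp_R H^{i}_{I,J}(M)\subseteq\{\frak m\}$, so in particular $\Supp_R H^{i}_{I,J}(M)$ is a finite subset of $\Max(R)$ for all $i<a$; applying \ref{31} with $t:=a$ yields that $H^{i}_{I,J}(M)$ is Artinian for all $i<a$, whence $b\ge a$. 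Therefore $a=b$.
\end{proof}
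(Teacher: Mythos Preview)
Your proof is correct and follows the same route the paper intends: Corollary~\ref{319} is stated without proof in the paper, as an immediate specialization of Corollary~\ref{31} to the local case, which is exactly what you do. The two one-sided inequalities you give are the clean way to unpack the equivalence in \ref{31} into an equality of infima.
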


\begin{proposition}\label{32}
Let $M$ be a finitely generated $R$-module, or be a $ZD$-module such that
$M_{\frak{p}}/{\frak{p}M_{\frak{p}}}\neq0$ for all $\frak{p}\in
{\rm W}(I,J)$. Then
$$\inf\{i: H^{i}_{I,J}(M)\neq0\}=\inf\{\depth M_{\frak p}: {\frak p}\in {\rm
W}(I,J)\}.$$
\end{proposition}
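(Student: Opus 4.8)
The plan is to reduce the statement to the analogous classical fact about ordinary local cohomology, namely $\inf\{i : H^i_{\frak a}(M) \neq 0\} = \operatorname{grade}(\frak a, M) = \inf\{\depth M_{\frak p} : {\frak p} \in V(\frak a)\}$ for a finitely generated module (and its $ZD$-analogue, which is exactly the content of \cite[Theorem 2.13]{pl} / \ref{23} specialized to $S$ the class of zero modules, since then $S$-$\depth_{\frak a}(M) = \depth_{\frak a}(M)$). Indeed, take $S$ to be the class of zero modules in Corollary \ref{21}: this is a Serre subcategory closed under injective hulls, hence a Melkersson subcategory, so \ref{21} gives
$$\inf\{i : H^i_{I,J}(M) \neq 0\} = \inf\{\inf\{i : H^i_{\frak a}(M) \neq 0\} : {\frak a} \in \tilde{\rm W}(I,J)\}.$$
So the task becomes identifying the right-hand side with $\inf\{\depth M_{\frak p} : {\frak p} \in {\rm W}(I,J)\}$.

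Next I would handle each inner term. For a fixed ${\frak a} \in \tilde{\rm W}(I,J)$, under the hypothesis on $M$ (finitely generated, or $ZD$ with $M_{\frak p}/{\frak p}M_{\frak p} \neq 0$ on ${\rm W}(I,J)$, which forces $M/{\frak a}M \neq 0$ since $V({\frak a}) \subseteq {\rm W}(I,J)$), one has
$$\inf\{i : H^i_{\frak a}(M) \neq 0\} = \depth_{\frak a}(M) = \inf\{\depth M_{\frak p} : {\frak p} \in V({\frak a})\};$$
the first equality is the characterization of grade via local cohomology (classical in the finitely generated case; in the $ZD$ case it is \ref{23} with the zero subcategory, together with the identification of $S$-depth with depth recorded after Definition 2.7), and the second is the standard local-global description of grade. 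Taking the infimum over all ${\frak a} \in \tilde{\rm W}(I,J)$ then yields
$$\inf_i\{H^i_{I,J}(M) \neq 0\} = \inf_{{\frak a} \in \tilde{\rm W}(I,J)}\ \inf_{{\frak p} \in V({\frak a})} \depth M_{\frak p}.$$

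Finally I would verify the set-theoretic identity $\bigcup_{{\frak a} \in \tilde{\rm W}(I,J)} V({\frak a}) = {\rm W}(I,J)$, which makes the iterated infimum collapse to $\inf\{\depth M_{\frak p} : {\frak p} \in {\rm W}(I,J)\}$. For ``$\subseteq$'': if ${\frak a} \in \tilde{\rm W}(I,J)$, so $I^n \subseteq J + {\frak a}$ for some $n$, and ${\frak p} \supseteq {\frak a}$ is prime, then $I^n \subseteq J + {\frak p}$, so ${\frak p} \in {\rm W}(I,J)$. For ``$\supseteq$'': any ${\frak p} \in {\rm W}(I,J)$ is itself an ideal lying in $\tilde{\rm W}(I,J)$ and lies in $V({\frak p})$. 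This elementary bookkeeping is the only genuinely new ingredient; the main obstacle, such as it is, lies not in any hard estimate but in confirming that the hypothesis ``$M_{\frak p}/{\frak p}M_{\frak p} \neq 0$ for all ${\frak p} \in {\rm W}(I,J)$'' (or finite generation) is precisely what licenses applying \ref{23}/\cite[Theorem 2.13]{pl} with the zero subcategory to each ${\frak a}$ — i.e.\ that $M/{\frak a}M \notin S = \{0\}$ for every relevant ${\frak a}$, which follows since $V({\frak a}) \subseteq {\rm W}(I,J)$ is nonempty and Nakayama at a prime in $V({\frak a})$ gives $M_{\frak p}/{\frak a}M_{\frak p} \neq 0$. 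Assembling these three steps completes the proof.
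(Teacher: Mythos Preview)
Your approach is correct and takes a genuinely different route from the paper. The paper argues directly with the minimal injective resolution: using \cite[Corollary~2.14]{pl} to identify $\depth M_{\frak p}$ with $\inf\{i:\mu^i(\frak p,M)\neq 0\}$, and the description $\Gamma_{I,J}(E^i_R(M))\cong\bigoplus_{\frak p\in {\rm W}(I,J)}\mu^i(\frak p,M)E_R(R/\frak p)$ together with Theorem~\ref{29}, it reads off both inequalities from the (non)vanishing of Bass numbers over ${\rm W}(I,J)$. You instead invoke Corollary~\ref{21} with $S$ the zero class (legitimate, as this is the class of modules with $\dim\le -1$) to reduce to ordinary local cohomology, then use $\inf\{i:H^i_{\frak a}(M)\neq 0\}=\depth_{\frak a}(M)=\inf_{\frak p\in {\rm V}(\frak a)}\depth M_{\frak p}$ together with the set equality $\bigcup_{\frak a\in\tilde{\rm W}(I,J)}{\rm V}(\frak a)={\rm W}(I,J)$. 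This has the merit of exhibiting \ref{32} as a consequence of the Section~2 machinery rather than the injective-resolution results of Section~3. One caution: the ``standard local-global description of grade'' you invoke is classical for finitely generated modules, but for $ZD$-modules it is not automatic, and its most direct justification runs through Bass numbers and \cite[Corollary~2.14]{pl}---essentially the paper's own ingredients. A minor point: your verification that $M/\frak a M\neq 0$ for every $\frak a\in\tilde{\rm W}(I,J)$ fails at $\frak a=R\in\tilde{\rm W}(I,J)$; this is harmless since that term contributes $\infty$ to the infimum, but the nonemptiness/Nakayama remark should be phrased for proper $\frak a$ only.
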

\begin{proof}
Let $t=\inf\{\depth M_{\frak p}: {\frak p}\in {\rm W}(I,J)\}$.
It follows by \cite[Corollary 2.14]{pl} that
$\mu^{i}(\frak p,M)=0$ for all $\frak p\in {\rm W}(I,J)$ and
all $i<t$. So $\Gamma_{I,J}(E^i_R(M))=0$ for all $i<t$,
and hence $H^{i}_{I,J}(M)=0$ for all $i<t$.
Therefore $t\leq\inf\{i: H^{i}_{I,J}(M)\neq0\}$. Now it is enough to show that
$H^{t}_{I,J}(M)\neq0$. By assumption, there is  ${\frak q}\in {\rm
W}(I,J)$ such that $t=\depth M_{\frak q}$. It follows by \cite[Corollary 2.14]{pl} that
 $\mu^{t}(\frak q,M)\not=0$. Therefore $\Gamma_{I,J}(E^t_R(M))\not=0$, and hence
$H^t_{I,J}(M)\not=0$ by \ref{29}.
\end{proof}

We can get a generalization of   \cite[Theorem 2.4]{cw}.

\begin{proposition}\label{33}
Let $M$ be a finitely generated $R$-module, or be a $ZD$-module such that
$M_{\frak{p}}/{\frak{p}M_{\frak{p}}}\neq0$ for all $\frak{p}\in
{\rm W}(I,J)-\Max(R)$. Then
$$\inf\{i: \Supp_R H^{i}_{I,J}(M)\not\subseteq \Max(R) \}=
\inf\{\depth M_{\frak p}: {\frak p}\in {\rm W}(I,J)-\Max(R)\}.$$
\end{proposition}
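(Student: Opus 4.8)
The plan is to mimic the proof of Proposition \ref{32}, but to restrict attention to the non-maximal primes in ${\rm W}(I,J)$ and to replace the vanishing condition $\Gamma_{I,J}(E^i_R(M))=0$ with the finiteness/support condition $\Supp_R\Gamma_{I,J}(E^i_R(M))\subseteq\Max(R)$. First I would set $t=\inf\{\depth M_{\frak p}:{\frak p}\in{\rm W}(I,J)-\Max(R)\}$ and recall from the displayed formula at the start of Section 3 that $\Supp_R\Gamma_{I,J}(E^i_R(M))=\{\frak p\in{\rm W}(I,J):\mu^i(\frak p,M)\neq0\}$. For a prime $\frak p\in{\rm W}(I,J)-\Max(R)$, the hypothesis on $M$ (finitely generated, or $ZD$ with $M_{\frak p}/\frak pM_{\frak p}\neq0$) lets me invoke \cite[Corollary 2.14]{pl}, which relates $\mu^i(\frak p,M)$ to $\depth M_{\frak p}$; concretely, $\mu^i(\frak p,M)=0$ for $i<\depth M_{\frak p}$ and $\mu^{\depth M_{\frak p}}(\frak p,M)\neq0$.

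From this, for each $i<t$ every prime $\frak p\in{\rm W}(I,J)$ with $\mu^i(\frak p,M)\neq0$ must be maximal, so $\Supp_R\Gamma_{I,J}(E^i_R(M))\subseteq\Max(R)$ for all $i<t$. Moreover each $E_R(R/\frak m)$ with $\frak m$ maximal is an Artinian $R$-module, and by \cite[Proposition 1.11]{ty} the module $\Gamma_{I,J}(E^i_R(M))$ is a direct sum of copies of such $E_R(R/\frak p)$ over $\frak p\in{\rm W}(I,J)$; so in fact $\Gamma_{I,J}(E^i_R(M))$ is Artinian, in particular has support contained in $\Max(R)$, for all $i<t$. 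Now I would apply Theorem \ref{29} with $S$ the Serre subcategory of modules with support a finite subset of $\Max(R)$ (equivalently, use Corollary \ref{31} together with the Artinian version of Theorem \ref{29}): since $\Gamma_{I,J}(E^i_R(M))\in S$ for all $i<t$, we get $H^i_{I,J}(M)\in S$ for all $i<t$, i.e. $\Supp_R H^i_{I,J}(M)\subseteq\Max(R)$ for all $i<t$. Hence $t\leq\inf\{i:\Supp_R H^i_{I,J}(M)\not\subseteq\Max(R)\}$.

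For the reverse inequality it suffices to exhibit one index $i\leq t$ with $\Supp_R H^i_{I,J}(M)\not\subseteq\Max(R)$, and the natural candidate is $i=t$ itself. By the definition of $t$ there is $\frak q\in{\rm W}(I,J)-\Max(R)$ with $\depth M_{\frak q}=t$, and then \cite[Corollary 2.14]{pl} gives $\mu^t(\frak q,M)\neq0$, so $\frak q\in\Supp_R\Gamma_{I,J}(E^t_R(M))$ and $\frak q$ is not maximal. The remaining point is to pass from $\Gamma_{I,J}(E^t_R(M))$ having a non-maximal prime in its support to $H^t_{I,J}(M)$ having one. I would argue as in \ref{32}: we have already shown $\Gamma_{I,J}(E^i_R(M))$ is Artinian for $i<t$, so $\Gamma_{I,J}(E^{i}_R(M))_{\frak q}=0$ for $i<t$ and all non-maximal $\frak q$; therefore localizing the complex $\Gamma_{I,J}(E^\bullet_R(M))$ at $\frak q$, the term in degree $t$ is $\Gamma_{I,J}(E^t_R(M))_{\frak q}\neq0$ (it contains $E_R(R/\frak q)_{\frak q}=E_{R_{\frak q}}(R_{\frak q}/\frak qR_{\frak q})\neq0$ as a direct summand) while all earlier terms vanish, so $H^t$ of the localized complex is nonzero; since localization is exact and commutes with $\Gamma_{I,J}$ on injectives, $H^t_{I,J}(M)_{\frak q}\neq0$, giving $\frak q\in\Supp_R H^t_{I,J}(M)$.

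The main obstacle I anticipate is the last step: making precise that a non-maximal prime genuinely survives into $H^t_{I,J}(M)$ rather than being cancelled by the differential $d^{t-1}$. This is exactly why the Artinian-ness of the lower terms $\Gamma_{I,J}(E^i_R(M))$ for $i<t$ is needed — it forces $\Ker\Gamma_{I,J}(d^{t-1})=\Gamma_{I,J}(E^{t-1}_R(M))$ localized at $\frak q$ to be zero, so no cancellation can occur at $\frak q$, and one should double-check (as in \cite[Lemma 5.4]{m} / \ref{29}) that $\Gamma_{I,J}(E^t_R(M))$ being the injective hull of $\Ker\Gamma_{I,J}(d^t)$ is compatible with reading off $\Supp$. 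Everything else is a direct transcription of the arguments already used for \ref{32} and \ref{29}, with "$=0$" replaced by "$\in S$" where $S$ is the Serre subcategory of modules supported on finitely many maximal ideals.
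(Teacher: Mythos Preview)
Your overall strategy matches the paper's, but there is a genuine gap in the first direction. You assert that $\Gamma_{I,J}(E^i_R(M))$ is Artinian because it is a direct sum of modules $E_R(R/\frak m)$ with $\frak m$ maximal. This fails in general: the index set $\{\frak m\in{\rm W}(I,J)\cap\Max(R):\mu^i(\frak m,M)\neq 0\}$ need not be finite (and for a $ZD$-module $M$ the multiplicities $\mu^i(\frak m,M)$ need not be finite either), so the direct sum can be a non-Artinian module with infinite support. Consequently you are not entitled to take $S$ to be ``modules with support a finite subset of $\Max(R)$'' in Theorem~\ref{29}. The repair is immediate: take instead $S=\{N:\dim_R N\le 0\}=\{N:\Supp_R N\subseteq\Max(R)\}$, which is a Serre subcategory closed under injective hulls, and for which you have honestly verified $\Gamma_{I,J}(E^i_R(M))\in S$ for $i<t$. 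Theorem~\ref{29} then gives $\Supp_R H^i_{I,J}(M)\subseteq\Max(R)$ for $i<t$, which is exactly what you need. The same correction applies in your reverse inequality, where you only use that $\Gamma_{I,J}(E^i_R(M))_{\frak q}=0$ for non-maximal $\frak q$; this follows from $\Supp\subseteq\Max(R)$ and does not require Artinianness.

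With that fix your argument is correct, and it is essentially the paper's proof run in the opposite direction: the paper sets $t$ equal to the left-hand side, applies Theorem~\ref{29} (with the same $S$) as a biconditional to read off that $\mu^i(\frak p,M)=0$ for all $\frak p\in{\rm W}(I,J)\setminus\Max(R)$ and $i<t$ while $\mu^t(\frak q,M)\neq 0$ for some such $\frak q$, and then invokes \cite[Corollary~2.14]{pl} to translate these into the depth statement. In particular the paper avoids your localization argument entirely by using the contrapositive of Theorem~\ref{29} at level $t+1$: if $\Gamma_{I,J}(E^t_R(M))\notin S$ then $H^j_{I,J}(M)\notin S$ for some $j\le t$, hence for $j=t$. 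Your localization route also works (the essentiality of $\Ker\Gamma_{I,J}(d^t)$ in $\Gamma_{I,J}(E^t_R(M))$ forces $\frak q\in\Ass\Ker\Gamma_{I,J}(d^t)$, so the kernel survives at $\frak q$), but it is more labor than necessary.
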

\begin{proof}
Let $t=\inf\{i: \Supp_R H^{i}_{I,J}(M)\not\subseteq \Max(R) \}$.
It follows by \ref{29} that $\mu^{i}(\frak p,M)=0$ for all $\frak
p\in {\rm W}(I,J)-\Max(R)$ and  all $i<t$, and there is  $\frak
q\in {\rm W}(I,J)-\Max(R)$ such that $\mu^{t}(\frak q,M)\neq 0$.
Now it follows by \cite[Corollary 2.14]{pl} that $\depth M_{\frak p}\geq t$ for all $\frak p\in {\rm
W}(I,J)-\Max(R)$, and $\depth M_{\frak q}=t$.
Therefore
$\inf\{\depth M_{\frak p}: \frak p\in {\rm W}(I,J)-\Max(R)\}=t.$
\end{proof}

\begin{theorem}\label{35}
Let $\Supp_R H^{i}_{I,J}(M)$ be a finite subset of $\Max(R)$ for
all $i<t$. Then there are maximal ideals $\frak m_1, \frak
m_2,\ldots,\frak m_k\in {\rm W}(I,J)$ such that
$H^{i}_{I,J}(M)\cong H^{i}_{\frak m_1\frak m_2\cdots\frak m_k}(M)$
for all $i<t$.
\end{theorem}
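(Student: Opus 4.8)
The plan is to pass to the minimal injective resolution $E^\bullet(M)$ and exploit the isomorphism $\Gamma_{I,J}(E^i_R(M))\cong\bigoplus_{\frak p\in {\rm W}(I,J)}\mu^i({\frak p},M)E_R(R/\frak p)$ recalled at the beginning of this section, reducing everything to bookkeeping about which indecomposable injective summands of $E^i_R(M)$ survive the torsion functors $\Gamma_{I,J}$ and $\Gamma_{\frak a}$ for an appropriate ideal $\frak a$.

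First I would translate the support hypothesis. By \ref{31}, the assumption that $\Supp_R H^{i}_{I,J}(M)$ is a finite subset of $\Max(R)$ for all $i<t$ is equivalent to $H^{i}_{I,J}(M)$ being Artinian for all $i<t$; then \ref{29}, applied with $S$ the class of Artinian $R$-modules (a Serre subcategory closed under taking injective hulls), gives that $\Gamma_{I,J}(E^i_R(M))$ is Artinian for all $i<t$. Since $E_R(R/\frak p)$ is Artinian exactly when $\frak p$ is maximal, and a direct sum of nonzero modules is Artinian only if it has finitely many summands, it follows that for each $i<t$ the set $A_i:=\{\frak p\in {\rm W}(I,J):\mu^i(\frak p,M)\neq0\}$ is a finite set of maximal ideals. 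Put $\{\frak m_1,\ldots,\frak m_k\}:=\bigcup_{i<t}A_i$: a finite set of maximal ideals, each lying in ${\rm W}(I,J)$. (If this set is empty then $H^{i}_{I,J}(M)=0$ for all $i<t$ and there is nothing to prove, taking $k=0$ and reading the empty product as $R$; so assume it is nonempty.)

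Next set $\frak a:=\frak m_1\frak m_2\cdots\frak m_k$. Then ${\rm V}(\frak a)=\{\frak m_1,\ldots,\frak m_k\}$, since a prime containing the product $\frak m_1\cdots\frak m_k$ contains, hence equals, one of the $\frak m_j$; and $\frak a\in\tilde{{\rm W}}(I,J)$, because choosing $n_j$ with $I^{n_j}\subseteq J+\frak m_j$ gives $I^{n_1+\cdots+n_k}\subseteq\prod_{j=1}^k(J+\frak m_j)\subseteq J+\frak m_1\cdots\frak m_k$. Recall also that $\Gamma_{\frak b}(E_R(R/\frak p))$ equals $E_R(R/\frak p)$ when $\frak p\in {\rm V}(\frak b)$ and is $0$ otherwise (the classical case of \cite[Proposition 1.11]{ty}). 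As ${\rm V}(\frak a)\subseteq {\rm W}(I,J)$, we always have $\Gamma_{\frak a}(E^i_R(M))\subseteq\Gamma_{I,J}(E^i_R(M))$ inside $E^i_R(M)$; and for $i<t$ the reverse inclusion holds too, because the indecomposable summands $E_R(R/\frak p)$ of $E^i_R(M)$ with $\frak p\in {\rm W}(I,J)$ are exactly those with $\frak p\in A_i\subseteq\{\frak m_1,\ldots,\frak m_k\}={\rm V}(\frak a)$. Hence $\Gamma_{\frak a}(E^i_R(M))=\Gamma_{I,J}(E^i_R(M))$ for all $i<t$.

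Finally, $\Gamma_{I,J}(E^\bullet(M))$ and $\Gamma_{\frak a}(E^\bullet(M))$ are subcomplexes of $E^\bullet(M)$ whose differentials are the restrictions of the fixed maps $d^i$, and they agree in every degree $<t$. For any such subcomplex $C^\bullet\subseteq E^\bullet(M)$ the cohomology at degree $i$ is $(C^i\cap\Ker d^i)/d^{i-1}(C^{i-1})$, which depends only on the two terms $C^{i-1}$ and $C^i$; so for every $i<t$ (with $C^{-1}=0$ understood when $i=0$) we obtain $H^{i}_{I,J}(M)=H^{i}(\Gamma_{I,J}(E^\bullet(M)))=H^{i}(\Gamma_{\frak a}(E^\bullet(M)))=H^{i}_{\frak a}(M)=H^{i}_{\frak m_1\frak m_2\cdots\frak m_k}(M)$. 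I expect the only genuine subtlety to be this last observation — that the two complexes may legitimately differ in degree $t$ without that affecting cohomology in any lower degree — while the structure theory of injective modules and \cite[Proposition 1.11]{ty} carry the rest.
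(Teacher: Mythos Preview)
Your proof is correct and follows essentially the same route as the paper: pass to the minimal injective resolution, use \ref{29} to see that only finitely many maximal primes of ${\rm W}(I,J)$ contribute to $\Gamma_{I,J}(E^i_R(M))$ for $i<t$, take $\frak a$ to be their product, and check that $\Gamma_{\frak a}$ and $\Gamma_{I,J}$ agree on $E^i_R(M)$ for $i<t$. You are in fact more careful than the paper on two points it leaves implicit --- the verification that $\frak a\in\tilde{\rm W}(I,J)$, and the observation that the possible discrepancy at degree $t$ does not affect $H^{i}$ for $i<t$ since $\Ker\Gamma(d^{i})=\Gamma(E^i)\cap\Ker d^{i}$ depends only on the term in degree $i$.
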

\begin{proof}
It follows by \ref{29} that $\Supp_R \Gamma_{I,J}(E^i(M))$ is a
finite subset of $\Max(R)$ for all $i<t$. Let $\Supp_R
\Gamma_{I,J}(E^i(M))=\{\frak m_{i1},\frak m_{i2},...,\frak
m_{ik_i}\}$, where $k_i$ is an integer. Then
$$\Gamma_{I,J}(E^i(M))\cong \bigoplus_{j=1}^{k_i} \mu^{i}(\frak
m_{ij},M) E_R(R/\frak m_{ij})$$ for all $i<t$. Put $\frak
a=\prod_{i,j} \frak m_{ij}$. Then ${\rm V}(\frak a)=\{\frak m_{ij}:
0\leq i<t, 1\leq j\leq k_i\}$, and $\frak a\in \tilde{\rm W}(I,J)$.
Therefore
\begin{eqnarray*}
\Gamma_{\frak a}(E^i(M))&\cong& \bigoplus_{\frak p\in {\rm
V}(\frak a)} \mu^{i}(\frak p,M) E_R(R/\frak p)\\
&=&\bigoplus_{j=1}^{k_i} \mu^{i}(\frak m_{ij},M) E_R(R/\frak
m_{ij})\cong\Gamma_{I,J}(E^i(M))
\end{eqnarray*}
 for all $i<t$, and hence
$H^{i}_{I,J}(M)\cong H^{i}_{\frak a}(M)$ for all $i<t$.
\end{proof}

\begin{corollary}\label{40}
Let $\Supp_R H^{i}_{I,J}(M)$ be a finite subset of $\Max(R)$ for
all $i<t$. Then there are maximal ideals $\frak m_1, \frak
m_2,\ldots,\frak m_k\in {\rm W}(I,J)$ such that
$$H^{i}_{I,J}(M)\cong H^{i}_{\frak m_1}(M)\oplus H^{i}_{\frak
m_2}(M)\oplus\cdots\oplus H^{i}_{\frak m_k}(M)$$ for all $i<t$.
\end{corollary}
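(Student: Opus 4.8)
The plan is to deduce Corollary \ref{40} from Theorem \ref{35} by analyzing the structure of local cohomology with respect to a product of distinct maximal ideals. By Theorem \ref{35}, there are maximal ideals $\frak m_1,\ldots,\frak m_k\in {\rm W}(I,J)$ with $H^{i}_{I,J}(M)\cong H^{i}_{\frak m_1\cdots\frak m_k}(M)$ for all $i<t$; we may assume the $\frak m_j$ are pairwise distinct. So it suffices to prove that $H^{i}_{\frak a}(M)\cong\bigoplus_{j=1}^{k}H^{i}_{\frak m_j}(M)$ for $\frak a=\frak m_1\cdots\frak m_k=\frak m_1\cap\cdots\cap\frak m_k$ and every $i$, for any $R$-module $M$.

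First I would record that $\Gamma_{\frak a}=\Gamma_{\frak m_1}\oplus\cdots\oplus\Gamma_{\frak m_k}$ as functors, i.e. for any $R$-module $M$ the canonical maps give $\Gamma_{\frak a}(M)=\sum_{j}\Gamma_{\frak m_j}(M)$ and this sum is direct. Indeed an element killed by a power of $\frak a=\frak m_1\cdots\frak m_k$ decomposes via the comaximality of the $\frak m_j^n$ (Chinese Remainder / partition of unity: since $\frak m_i$ and $\frak m_j$ are comaximal for $i\neq j$, so are $\frak m_i^n$ and $\prod_{l\neq i}\frak m_l^n$), and an element in $\Gamma_{\frak m_i}(M)\cap\sum_{j\neq i}\Gamma_{\frak m_j}(M)$ is killed both by a power of $\frak m_i$ and by a power of $\prod_{j\neq i}\frak m_j$, hence by comaximality is zero. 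Thus $\Gamma_{\frak a}\cong\bigoplus_{j=1}^k\Gamma_{\frak m_j}$ as functors on $R$-modules.

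Next, since a direct sum of exact functors is exact and a direct sum of injective resolutions is an injective resolution, taking the $i$-th right derived functor of both sides of $\Gamma_{\frak a}\cong\bigoplus_j\Gamma_{\frak m_j}$ yields $H^{i}_{\frak a}(M)\cong\bigoplus_{j=1}^k H^{i}_{\frak m_j}(M)$ for every $i$ and every $R$-module $M$. (Concretely: apply $\Gamma_{\frak a}$ to the minimal injective resolution $E^\bullet(M)$; by the functorial decomposition this complex is the direct sum of the complexes $\Gamma_{\frak m_j}(E^\bullet(M))$, and cohomology commutes with finite direct sums.) Combining with Theorem \ref{35} gives $H^{i}_{I,J}(M)\cong H^{i}_{\frak m_1}(M)\oplus\cdots\oplus H^{i}_{\frak m_k}(M)$ for all $i<t$, as desired.

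The only real obstacle is the bookkeeping in the first step — verifying that the torsion functor for a product of pairwise comaximal ideals splits as the direct sum of the individual torsion functors, uniformly in $M$. This is where the distinctness of the $\frak m_j$ (equivalently, their pairwise comaximality) is essential, and it is exactly the passage from the single ideal $\frak m_1\cdots\frak m_k$ in Theorem \ref{35} to the individual maximal ideals in the statement; everything after that is a formal derived-functor argument. One should also note that the $\frak m_{ij}$ produced in the proof of Theorem \ref{35} can be collected into a finite set of distinct maximal ideals, so $\frak a$ is indeed (up to radical) a product of distinct maximal ideals, which is all that is used.
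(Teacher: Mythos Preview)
Your argument is correct. Both you and the paper start from Theorem~\ref{35} to obtain $H^{i}_{I,J}(M)\cong H^{i}_{\frak m_1\cdots\frak m_k}(M)$; the only difference is how the right-hand side is split. The paper simply invokes the Mayer--Vietoris sequence: for comaximal ideals $\frak a$ and $\frak b$ one has $H^{i}_{\frak a+\frak b}(M)=0$, so the long exact sequence collapses to isomorphisms $H^{i}_{\frak a\cap\frak b}(M)\cong H^{i}_{\frak a}(M)\oplus H^{i}_{\frak b}(M)$, and an induction on $k$ finishes. You instead prove the decomposition one level earlier, showing $\Gamma_{\frak m_1\cdots\frak m_k}\cong\bigoplus_j\Gamma_{\frak m_j}$ as functors via the Chinese Remainder Theorem and then passing to right derived functors. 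Your route is a bit more self-contained (no appeal to an external exact sequence) and yields the splitting for \emph{all} $i$ at once rather than inductively; the paper's route is shorter to state since Mayer--Vietoris is a standard citation. The two are essentially the same idea, since the Mayer--Vietoris isomorphism in the comaximal case is precisely the derived version of your functorial splitting.
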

\begin{proof}
The claim follows by \ref{35} and the Mayer-Vietoris sequence \cite[3.2.3]{bs}.
\end{proof}

\begin{corollary}\label{36}
Let $(R,\frak m)$ be  a local ring. If
$\Supp_RH^{i}_{I,J}(M)\subseteq\{\frak m\}$ for all $i<t$, then
$H^{i}_{I,J}(M)\cong H^{i}_{\frak m }(M)$ for all $i<t$.
\end{corollary}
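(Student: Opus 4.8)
The plan is to deduce this from Theorem~\ref{35} together with the fact that $(R,\frak m)$ has only one maximal ideal. Since $\Supp_R H^{i}_{I,J}(M)\subseteq\{\frak m\}$ is in particular a finite subset of $\Max(R)$ for all $i<t$, Theorem~\ref{35} supplies maximal ideals $\frak m_1,\dots,\frak m_k\in{\rm W}(I,J)$ with $H^{i}_{I,J}(M)\cong H^{i}_{\frak m_1\cdots\frak m_k}(M)$ for all $i<t$; each $\frak m_j$ must equal $\frak m$, so $\frak m_1\cdots\frak m_k$ is a power of $\frak m$, has radical $\frak m$, and therefore $H^{i}_{\frak m_1\cdots\frak m_k}(M)=H^{i}_{\frak m}(M)$. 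This already yields the claim when $k\geq1$; the case $k=0$ (that is, $H^{i}_{I,J}(M)=0$ for all $i<t$) still requires checking that $H^{i}_{\frak m}(M)=0$ for $i<t$, so I would prefer to run a single argument that covers it.

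That argument mimics the proofs of \ref{29} and \ref{35}. Fix a minimal injective resolution $E^{\bullet}_R(M)$ of $M$ as in $(*)$. Applying \ref{29} with $S$ the Serre subcategory, closed under injective hulls, of $R$-modules supported in $\{\frak m\}$, the hypothesis forces $\Supp_R\Gamma_{I,J}(E^{i}_R(M))\subseteq\{\frak m\}$ for all $i<t$. Feeding this into the decomposition $\Gamma_{I,J}(E^{i}_R(M))\cong\bigoplus_{\frak p\in{\rm W}(I,J)}\mu^{i}(\frak p,M)E_R(R/\frak p)$ recalled at the start of the section gives $\mu^{i}(\frak p,M)=0$ for every $\frak p\in{\rm W}(I,J)\setminus\{\frak m\}$ and every $i<t$. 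Since $\frak m\in{\rm W}(I,J)$ and ${\rm V}(\frak m)=\{\frak m\}$, the formula $\Gamma_{\frak a}(E^{i}_R(M))\cong\bigoplus_{\frak p\in{\rm V}(\frak a)}\mu^{i}(\frak p,M)E_R(R/\frak p)$ used in the proof of \ref{35} shows that, for all $i<t$,
$$\Gamma_{\frak m}(E^{i}_R(M))=\mu^{i}(\frak m,M)E_R(R/\frak m)=\Gamma_{I,J}(E^{i}_R(M))$$
as submodules of $E^{i}_R(M)$.

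Hence the complexes $\Gamma_{\frak m}(E^{\bullet}_R(M))$ and $\Gamma_{I,J}(E^{\bullet}_R(M))$ have the same terms in all degrees $\leq t-1$, with the same differentials (restrictions of the $d^{i}$), so their $i$-th cohomology modules agree for every $i<t$: for $i<t-1$ this involves only degrees $i-1,i,i+1<t$, and for $i=t-1$ the cohomology is $\Ker d^{t-1}/\Im d^{t-2}$, which is computed inside the common degree $t-2$ and $t-1$ terms. Therefore $H^{i}_{I,J}(M)\cong H^{i}_{\frak m}(M)$ for all $i<t$. I expect the only points needing care to be the boundary degree $i=t-1$, handled as just indicated, and the verification that $\frak m\in{\rm W}(I,J)$, which holds whenever $I$ is a proper ideal; the real content is already carried by Theorems~\ref{29} and~\ref{35}, so no serious obstacle is anticipated.
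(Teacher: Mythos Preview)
Your proposal is correct and follows the paper's intended route: Corollary~\ref{36} is stated in the paper without proof, as an immediate specialization of Theorem~\ref{35} (equivalently Corollary~\ref{40}) to the case of a unique maximal ideal. Your extra care with the boundary case $k=0$ and the observation that one needs $\frak m\in{\rm W}(I,J)$ (equivalently $I\neq R$) supplies rigor the paper leaves implicit, but the underlying argument is the same.
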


\begin{corollary}\label{37}
If $(R,\frak m)$ is a local ring, then
$$\inf\{i: \Supp_RH^{i}_{I,J}(M)\not\subseteq\{\frak m\}\}
=\inf\{i: H^{i}_{I,J}(M)\not\cong H^{i}_{\frak m }(M)\}.$$
\end{corollary}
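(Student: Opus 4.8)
The plan is to prove the equality by showing that each side is $\le$ the other. Write $s:=\inf\{i: \Supp_R H^{i}_{I,J}(M)\not\subseteq\{\frak m\}\}$ and $u:=\inf\{i: H^{i}_{I,J}(M)\not\cong H^{i}_{\frak m}(M)\}$. If either of these is $\infty$, the empty-index convention will force the other to be $\infty$ as well by the same arguments below, so it suffices to treat the finite case.

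For the inequality $u\ge s$, I would invoke \ref{36} directly. By the definition of $s$ we have $\Supp_R H^{i}_{I,J}(M)\subseteq\{\frak m\}$ for all $i<s$; applying \ref{36} with the bound taken to be $s$ yields $H^{i}_{I,J}(M)\cong H^{i}_{\frak m}(M)$ for all $i<s$. Hence $u\ge s$.

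For the reverse inequality $s\ge u$, fix any $i<u$, so that $H^{i}_{I,J}(M)\cong H^{i}_{\frak m}(M)$. The ordinary local cohomology module $H^{i}_{\frak m}(M)$ is $\frak m$-torsion, hence $\Supp_R H^{i}_{\frak m}(M)\subseteq {\rm V}(\frak m)=\{\frak m\}$ in the local ring $(R,\frak m)$. Transporting along the isomorphism gives $\Supp_R H^{i}_{I,J}(M)\subseteq\{\frak m\}$, and since this holds for every $i<u$, the definition of $s$ gives $s\ge u$. Combining the two inequalities finishes the proof.

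There is essentially no technical obstacle here: the content is entirely packaged in \ref{36} (which is itself built on \ref{35} and \ref{29}) plus the elementary fact that $\frak m$-torsion modules are supported in $\{\frak m\}$. The only point requiring a little care is the bookkeeping at the threshold index and the degenerate case where the infimum is $\infty$, which I would dispatch with the standard convention that an infimum over the empty set is $\infty$.
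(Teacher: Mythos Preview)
Your proposal is correct and matches the paper's approach: the paper states the corollary without proof, treating it as an immediate consequence of \ref{36}, and your argument spells out exactly that---\ref{36} gives $u\ge s$, while the reverse inequality is the trivial observation that $H^{i}_{\frak m}(M)$ is $\frak m$-torsion and hence supported in $\{\frak m\}$. The handling of the $\infty$ case and the threshold index is routine, as you note.
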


The following result is a generalization of \cite[Proposition
2.5]{cw}.

\begin{corollary}\label{38}
If $(R,\frak m)$ is a local ring, then
$$\inf\{i: H^{i}_{I,J}(M)\text{is not Artinian}\}
=\inf\{i: H^{i}_{I,J}(M)\not\cong H^{i}_{\frak m }(M)\}.$$
\end{corollary}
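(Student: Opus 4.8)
The plan is to derive Corollary~\ref{38} by combining the two ``dimension-shift'' type corollaries \ref{37} and \ref{319} that have already been established. Recall that \ref{37} states
$$\inf\{i: \Supp_RH^{i}_{I,J}(M)\not\subseteq\{\frak m\}\}
=\inf\{i: H^{i}_{I,J}(M)\not\cong H^{i}_{\frak m }(M)\},$$
while \ref{319} states
$$\inf\{i:\Supp_R H^{i}_{I,J}(M)\not\subseteq\{\frak m\}\}=
\inf\{i: H^{i}_{I,J}(M)\text{ is not Artinian}\}.$$
Both right-hand sides and left-hand sides are infima of sets of non-negative integers, so the two displayed equalities have a common left-hand side; transitivity immediately yields
$$\inf\{i: H^{i}_{I,J}(M)\text{ is not Artinian}\}
=\inf\{i: H^{i}_{I,J}(M)\not\cong H^{i}_{\frak m }(M)\},$$
which is exactly the assertion. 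So the proof is a one-line appeal: ``The claim follows by \ref{37} and \ref{319}.''

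If one prefers a self-contained argument that does not route through \ref{319}, I would instead argue directly that over a local ring $(R,\frak m)$ the conditions ``$H^{i}_{I,J}(M)$ is Artinian for all $i<t$'' and ``$\Supp_R H^{i}_{I,J}(M)\subseteq\{\frak m\}$ for all $i<t$'' are equivalent. One implication is trivial since an Artinian module over a local ring is supported only at the maximal ideal. For the converse, apply \ref{36}: if $\Supp_RH^{i}_{I,J}(M)\subseteq\{\frak m\}$ for all $i<t$, then $H^{i}_{I,J}(M)\cong H^{i}_{\frak m}(M)$ for all $i<t$, and each ordinary local cohomology module $H^{i}_{\frak m}(M)$ at the maximal ideal need not in general be Artinian for an arbitrary module $M$; however, the support condition together with \ref{31} (which only requires the support of $H^{i}_{I,J}(M)$ to be a finite subset of $\Max(R)$, here just $\{\frak m\}$) already forces $H^{i}_{I,J}(M)$ to be Artinian for all $i<t$. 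Then the infimum where Artinianness first fails coincides with the infimum where the support first escapes $\{\frak m\}$, and combining this with \ref{37} gives the result.

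The ``hard part'' here is essentially cosmetic rather than mathematical: all the substantive work—the spectral-sequence Lemma~\ref{1165} and Corollary~\ref{01} feeding into \ref{31}, the Bass-number computation for $\Gamma_{I,J}(E^i_R(M))$ feeding into \ref{29}, \ref{35} and \ref{36}, and the Mayer–Vietoris bookkeeping—has already been carried out in the earlier results, so the only thing to be careful about is that the three quantities being equated really are indexed over the same range of $i$ (all $i<t$, or all $i$, depending on the ambient hypothesis) and that the hypotheses of \ref{319} and \ref{37} are met verbatim, namely that $(R,\frak m)$ is local with no finiteness assumption on $M$. I expect no obstacle beyond confirming that \ref{319} and \ref{37} are stated for the same $t$-free ``global'' form, which they are. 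Accordingly, the proof I would record is simply:
\begin{proof}
The claim follows by \ref{37} and \ref{319}.
\end{proof}
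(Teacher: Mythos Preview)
Your proposal is correct and matches the paper's own proof exactly: the paper also records simply ``The claim follows by \ref{319} and \ref{37},'' using transitivity through the common quantity $\inf\{i:\Supp_R H^{i}_{I,J}(M)\not\subseteq\{\frak m\}\}$.
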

\begin{proof}
The claim follows by
\ref{319} and \ref{37}.
\end{proof}


\begin{corollary}\label{39}
Let $(R,\frak m)$ be a local ring. If
$\Supp_RH^{i}_{I,J}(M)\subseteq\{\frak m\}$ for all $i<t$, then
$H^{i}_{I,J}(M)\cong H^{i}_{\frak a}(M)$ for all $\frak a\in
\tilde{\rm W}(I,J)$ and all $i<t$.
\end{corollary}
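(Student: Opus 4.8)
The final statement to prove is Corollary~\ref{39}: if $(R,\frak m)$ is local and $\Supp_R H^{i}_{I,J}(M)\subseteq\{\frak m\}$ for all $i<t$, then $H^{i}_{I,J}(M)\cong H^{i}_{\frak a}(M)$ for all $\frak a\in\tilde{\rm W}(I,J)$ and all $i<t$. The plan is to reduce everything to the already-established isomorphism $H^{i}_{I,J}(M)\cong H^{i}_{\frak m}(M)$ from Corollary~\ref{36}, and then to show that for a local ring, $H^{i}_{\frak m}(M)$ and $H^{i}_{\frak a}(M)$ agree under the support hypothesis, for every $\frak a\in\tilde{\rm W}(I,J)$.

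First I would invoke Corollary~\ref{36}: the hypothesis $\Supp_R H^{i}_{I,J}(M)\subseteq\{\frak m\}$ for all $i<t$ gives $H^{i}_{I,J}(M)\cong H^{i}_{\frak m}(M)$ for all $i<t$. So it suffices to prove $H^{i}_{\frak m}(M)\cong H^{i}_{\frak a}(M)$ for all $i<t$ and all $\frak a\in\tilde{\rm W}(I,J)$. Fix such an $\frak a$. Since $\frak a\in\tilde{\rm W}(I,J)$, by definition $I^n\subseteq J+\frak a$ for some $n$, so ${\rm V}(\frak a)\subseteq{\rm W}(I,J)$; in particular $\frak m\in{\rm W}(I,J)$ because $(R,\frak m)$ is local. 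The natural candidate is to run the argument of Theorem~\ref{35} locally: using the minimal injective resolution $E^\bullet(M)$, one has $\Gamma_{\frak a}(E^i(M))\cong\bigoplus_{\frak p\in{\rm V}(\frak a)}\mu^i(\frak p,M)E_R(R/\frak p)$ and $\Gamma_{\frak m}(E^i(M))\cong\mu^i(\frak m,M)E_R(R/\frak m)$. Under the support hypothesis, Theorem~\ref{29} (applied with $S$ the class of modules with finite support, which is a Serre subcategory closed under injective hulls) forces $\Gamma_{I,J}(E^i(M))$ to have support contained in $\{\frak m\}$ for $i<t$; combined with the formula $\Gamma_{I,J}(E^i(M))\cong\bigoplus_{\frak p\in{\rm W}(I,J)}\mu^i(\frak p,M)E_R(R/\frak p)$, this means that for $\frak p\in{\rm W}(I,J)$ with $\frak p\neq\frak m$ we have $\mu^i(\frak p,M)=0$ for all $i<t$.

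Then for any $\frak a\in\tilde{\rm W}(I,J)$ and any $i<t$, every $\frak p\in{\rm V}(\frak a)\setminus\{\frak m\}$ lies in ${\rm W}(I,J)\setminus\{\frak m\}$, so $\mu^i(\frak p,M)=0$; hence $\Gamma_{\frak a}(E^i(M))\cong\mu^i(\frak m,M)E_R(R/\frak m)$ for $i<t$. The same computation gives $\Gamma_{\frak m}(E^i(M))\cong\mu^i(\frak m,M)E_R(R/\frak m)$, so $\Gamma_{\frak a}(E^i(M))\cong\Gamma_{\frak m}(E^i(M))$ for $i<t$, and these isomorphisms are compatible with the differentials (both complexes are obtained by applying a torsion functor to $E^\bullet(M)$, and on the $\frak m$-primary indecomposable summands $E_R(R/\frak m)$ both functors act as the identity). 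Taking cohomology yields $H^{i}_{\frak a}(M)\cong H^{i}_{\frak m}(M)$ for all $i<t$, which combined with $H^{i}_{\frak m}(M)\cong H^{i}_{I,J}(M)$ finishes the proof. Alternatively, and more briefly, one may simply cite Corollary~\ref{36} for $H^{i}_{I,J}(M)\cong H^{i}_{\frak m}(M)$ and then apply Theorem~\ref{35} with $\frak a$ absorbed: since $\frak m\subseteq\sqrt{\frak m+\frak a}$ always (as $\frak m$ is maximal, $\frak m+\frak a$ is either $\frak m$ or $R$), in the first case $H^i_{\frak a}(M)\cong H^i_{\frak m}(M)$ for $i<t$ by the same Bass-number vanishing.

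The main obstacle I expect is making the identification $\Gamma_{\frak a}(E^i(M))\cong\Gamma_{\frak m}(E^i(M))$ genuinely functorial in $i$, i.e.\ commuting with the differentials $\Gamma(d^i)$, rather than just an abstract isomorphism of modules degree by degree; this is what is needed to pass to cohomology. This is handled exactly as in the proof of Theorem~\ref{35}: the point is that both $\Gamma_{\frak a}$ and $\Gamma_{\frak m}$ are subfunctors of the identity, and once the hypothesis kills all Bass numbers $\mu^i(\frak p,M)$ for $\frak p\in{\rm W}(I,J)\setminus\{\frak m\}$ and $i<t$, the two subcomplexes of $E^\bullet(M)$ in degrees $<t$ literally coincide (each equals the sum of the $E_R(R/\frak m)$-summands), so no choice of isomorphism is even needed in those degrees. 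A minor subtlety is the boundary behaviour at degree $t$: Theorem~\ref{35} deals with this by noting the resulting cohomology isomorphism only needs to hold for $i<t$, and the same applies here.
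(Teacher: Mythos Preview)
Your main argument is correct and reaches the conclusion, but it differs from the paper's proof. One small slip: the Serre class you want in the application of Theorem~\ref{29} is not ``modules with finite support'' but rather ``modules with support contained in $\{\frak m\}$'' (equivalently, $\dim_R N\le 0$). Finite support alone would not force $\mu^i(\frak p,M)=0$ for every $\frak p\in{\rm W}(I,J)\setminus\{\frak m\}$; once you take the correct $S$, the rest of your Bass-number computation goes through exactly as written, including the observation that $\Gamma_{\frak m}(E^i)\subseteq\Gamma_{\frak a}(E^i)$ are literally equal subcomplexes of $E^\bullet(M)$ in degrees $<t$, so no compatibility issue arises. Your ``alternatively'' paragraph is muddled (Theorem~\ref{35} concerns $H^i_{I,J}$, and the remark about $\frak m+\frak a$ is not doing any work) and should simply be dropped.

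The paper's proof takes a different, more modular route: it first invokes Corollary~\ref{16} (the spectral-sequence/Melkersson machinery) to transfer the support hypothesis from $H^i_{I,J}(M)$ to $H^i_{\frak a}(M)$, obtaining $\Supp_R H^i_{\frak a}(M)\subseteq\{\frak m\}$ for all $i<t$, and then applies Corollary~\ref{36} twice---once to the pair $(I,J)$ and once with $(\frak a,0)$ in place of $(I,J)$---to conclude $H^i_{I,J}(M)\cong H^i_{\frak m}(M)\cong H^i_{\frak a}(M)$. Your approach avoids Corollary~\ref{16} entirely by exploiting the elementary inclusion ${\rm V}(\frak a)\subseteq{\rm W}(I,J)$ to read off the needed Bass-number vanishing directly; this is more self-contained and sidesteps the spectral sequence, at the cost of re-doing by hand the injective-resolution computation that underlies Theorem~\ref{35}. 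Either route is fine.
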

\begin{proof}
It follows by  \ref{16} that  $\Supp_RH^{i}_{\frak
a}(M)\subseteq\{\frak m\}$ for all $\frak a\in \tilde{\rm W}(I,J)$
and all $i<t$.  Therefore
$H^{i}_{\frak a}(M)\cong H^{i}_{\frak m}(M)$ for all $\frak a\in \tilde{\rm W}(I,J)$ and
all $i<t$, by  \ref{36}.  Now again using of  \ref{36} implies that
$H^{i}_{I,J}(M)\cong H^{i}_{\frak m }(M)$ for all $i<t$, and the claim follows.
\end{proof}


\end{document}